\documentclass[12pt]{amsart}
\usepackage{amssymb,amsmath}
\usepackage{enumerate}
\usepackage{mathrsfs}
\usepackage{xcolor}


\newtheorem{theorem}{Theorem}[section]
\newtheorem{proposition}[theorem]{Proposition}
\newtheorem{lemma}[theorem]{Lemma}
\newtheorem{corollary}[theorem]{Corollary}
\theoremstyle{definition}
\newtheorem{definition}[theorem]{Definition}

\newcommand{\clb}{\mathscr{B}}

\newcommand{\clm}{\mathcal{M}}

\newcommand{\raro}{\rightarrow}

\newcommand{\vp}{\varphi}

\newcommand{\bd}{\mathbb{D}}
\newcommand{\bt}{\mathbb{T}}
\newcommand{\bc}{\mathbb{C}}

\setlength{\textheight}{9.1in}\setlength{\textwidth}{475pt}
\oddsidemargin -0mm \evensidemargin -0mm \topmargin -0pt

\title[Automorphisms and Generalized projections]{Automorphisms and Generalized projections on spaces of analytic functions}

\author[Maurya]{Rahul Maurya}
\address{Department of Mathematics, Indian Institute of Science, Bangalore, 560012, India}
\email{rahulmaurya7892@gmail.com}

\author[Sarkar]{Jaydeb Sarkar}
\address{Indian Statistical Institute, Statistics and Mathematics Unit, 8th Mile, Mysore Road, Bangalore, 560 059, India}
\email{jaydeb@gmail.com, jay@isibang.ac.in}

\author[Sensarma]{Aryaman Sensarma}
\address{Indian Statistical Institute, Statistics and Mathematics Unit, 8th Mile, Mysore Road, Bangalore, 560 059, India}
\email{aryamansensarma@gmail.com}
\subjclass{46B20, 30H05, 46J15, 47L20, 46J10,46E15, 47B38}
\keywords{Bounded analytic functions, $H^p$-spaces, automorphisms, Neil algebra, generalized tri-circular projections}

\numberwithin{equation}{section}

\begin{document}


\begin{abstract}
We present complete classifications of automorphisms of two closed subalgebras of the bounded analytic functions on the open unit disc $\mathbb{D}$, namely, the subalgebra of functions vanishing at the origin, and the subalgebra of functions whose first derivative vanishes at the origin. The later subalgebra is known as the Neil algebra. We also characterize generalized tri-circular projections on $H^{p}(\mathbb{D})$ and $H^{p}(\mathbb{D}^2)$, $1\leq p \leq \infty$, $p\neq 2$.
\end{abstract}
\maketitle


\section{Introduction}

Let $\bd$ denote the open unit disc in the complex plane. Denote by $H^\infty(\bd)$ the commutative Banach algebra of all bounded analytic functions on $\bd$ with
\[
\|\vp\|_\infty = \sup \{|\vp(z)|: z \in \bd\} \qquad (\vp \in H^\infty(\bd)).
\]
This is one of the most important non-separable Banach algebras, with a variety of applications in function theory, operator theory, and operator algebras. The disc algebra $A(\bd)$ is another important space among the classical separable Banach algebras. Recall that
\[
A(\bd) = H^\infty(\bd) \cap C(\mathbb{T}).
\]
In general, given a domain $\Omega$ in $\mathbb{C}$, we denote by $A(\Omega)$ the space of analytic functions that extends continuously to the boundary of $\Omega$. A classic result of L. Bers \cite{B} serves as the starting point for our discussion: Two domains $\Omega_1$ and $\Omega_2$ are conformally equivalent if and only if there is an automorphism between $A(\Omega_1)$ and $A(\Omega_2)$. Recall that two domains are conformally equivalent if there exists an angle preserving bijective analytic map between them. This is the same as asserting that the domains being biholomorphically equivalent. In what follows,  we will refer to an \textit{automorphism} as a linear and algebra isomorphism.

Returning to the special case when $\Omega_1 = \Omega_2 = \bd$, an isomorphism $T: A(\bd) \raro A(\bd)$ is precisely given by (again, see Bers \cite{B})
\[
Tf = f \circ \tau \qquad (f \in A(\bd)),
\]
for some conformal map $\tau$ on $\bd$ (that is, $\tau \in Aut(\bd)$). Moreover, for $H^\infty(\bd)$, we have the same conclusion (a particular case of Rudin \cite{R}): Isomorphisms between $H^\infty(\bd)$ are induced by conformal mappings as described above. As Bers and Rudin noted, Chevalley and Kakutani's earlier work also inspired this development. See also \cite{Ro,Su} for more classical advancement.

In this paper, we examine automorphisms of two important subalgebras of $H^\infty(\bd)$ namely $H_0^{\infty}(\bd)$ and the Neil algebra. First, we recall that
\[
H_0^{\infty}(\mathbb{D})=\{f \in H^{\infty}(\mathbb{D}): f(0)=0\}.
\]
Like $H^{\infty}(\mathbb{D})$, the above space is crucial to functional analysis. For instance, see Lomonosov \cite{Lomonosov} for counterexamples in the context of the Bishop-Phelps-Bollob\'{a}s theorem. Next, we recall the Neil Algebra ${H}_{1}^{\infty}(\mathbb{D})$ (see \cite{DPRS}):
\[
{H}_{1}^{\infty}(\mathbb{D}) = \{f\in {H}^{\infty}(\mathbb{D}): f'(0) = 0\}.
\]
This space is commonly used to test classical theories such as the interpolation problem, corona theorem, commutant lifting theorem, and invariant subspaces, to name a few. The following is a summary of the main results concerning the automorphisms of $H_0^{\infty}(\mathbb{D})$ and $H_1^{\infty}(\mathbb{D})$ (see Theorems \ref{thm: auto of H0} and \ref{T2}). For any Banach space $X$, the Banach algebra of bounded linear operators on $X$ is denoted by $\clb(X)$.

\begin{theorem}
Let $X = H^\infty_0(\bd)$ or $H^\infty_1(\bd)$, and let $T : X \raro X$ be a map. Then $T$ is an automorphism if and only if there exists $\theta \in \mathbb{R}$ such that
\[
(Tf)(z) = f(e^{i\theta} z),
\]
for all $f \in X$ and $z \in \bd$.
\end{theorem}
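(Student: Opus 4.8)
The plan is to reduce both cases to the classical theorem of Rudin quoted above, that every algebra automorphism of $H^\infty(\bd)$ has the form $f\mapsto f\circ\tau$ for some $\tau\in Aut(\bd)$. I would do this by extending the given automorphism $T$ of the subalgebra $X$ to an automorphism $\hat T$ of the ambient algebra $H^\infty(\bd)$, and then reading off the extra constraint forced by $T(X)\subseteq X$. The reverse implication is routine: if $\tau(z)=e^{i\theta}z$ then $f\mapsto f\circ\tau$ is visibly a linear multiplicative bijection preserving $H^\infty_0(\bd)$ (since $(f\circ\tau)(0)=f(0)$) and $H^\infty_1(\bd)$ (since $(f\circ\tau)'(0)=e^{i\theta}f'(0)$), so I focus on the forward direction. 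Throughout I will use that these subalgebras are semisimple (point evaluations on $\bd$ separate points and vanish only on $0$), so that $T$ is automatically bounded.

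For $X=H^\infty_0(\bd)$ the extension is immediate. Writing each $f\in H^\infty(\bd)$ as $f=f(0)+(f-f(0))$ with $f-f(0)\in H^\infty_0(\bd)$, I set $\hat T f=f(0)+T(f-f(0))$. Expanding $(a+f_0)(b+g_0)$ with $f_0,g_0\in H^\infty_0(\bd)$ and using that $T$ is linear and multiplicative on $H^\infty_0(\bd)$ shows $\hat T$ is a unital algebra homomorphism, and it is bijective because the same recipe applied to $T^{-1}$ produces its inverse. Rudin's theorem then gives $\hat T f=f\circ\tau$ with $\tau\in Aut(\bd)$; since $T$ maps $H^\infty_0(\bd)$ into itself, applying this to $f(z)=z$ forces $\tau(0)=f(\tau(0))=0$, so $\tau$ is a rotation and $(Tf)(z)=f(e^{i\theta}z)$.

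The Neil algebra $X=H^\infty_1(\bd)$ is the substantive case, because the natural vector-space complement of $X$ in $H^\infty(\bd)$ is $\bc z$, which is not multiplicatively compatible, so the extension must be built by hand. The first step is to locate the image of the coordinate function: since $T$ is multiplicative and $(z^2)^3=(z^3)^2$, I have $T(z^2)^3=T(z^3)^2$, and I set $w:=T(z^3)/T(z^2)$. A comparison of the orders of vanishing of $T(z^2)$ and $T(z^3)$ forced by this relation shows $w$ is holomorphic, while $w^2=T(z^2)$ shows it is bounded; thus $w\in H^\infty(\bd)$ with $w^2=T(z^2)$ and $w^3=T(z^3)$. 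I then define $\hat T f=T\big(f-f'(0)z\big)+f'(0)\,w$.

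The hard part is proving $\hat T$ is multiplicative, which after expanding reduces to the single identity $T(z\,\tilde g)=w\,T(\tilde g)$ for every $\tilde g\in H^\infty_1(\bd)$ vanishing to order at least two (so that $z\tilde g\in H^\infty_1(\bd)$). The obstacle is that polynomials are not norm-dense in $H^\infty_1(\bd)$, so I cannot verify the identity on monomials and pass to a limit. Instead I would argue purely algebraically: squaring and cubing give $T(z\tilde g)^2=T(z^2)T(\tilde g^2)=(wT(\tilde g))^2$ and likewise $T(z\tilde g)^3=(wT(\tilde g))^3$; since $H^\infty(\bd)$ is an integral domain, the first forces $T(z\tilde g)=\pm\,wT(\tilde g)$ and the second forces the ratio to be a cube root of unity, and the only common possibility is $+1$. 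With multiplicativity established, $\hat T$ is a bounded unital endomorphism of $H^\infty(\bd)$ restricting to $T$ on $H^\infty_1(\bd)$; running the construction for $T^{-1}$ produces a two-sided inverse $S$ (here $S(w)=z$ follows from $w\,T(z^2)=T(z^3)$ by applying $S$ and using $S(T(z^2))=z^2$, $S(T(z^3))=z^3$), so $\hat T\in Aut(H^\infty(\bd))$. Finally Rudin gives $\hat T f=f\circ\tau$, and preservation of $H^\infty_1(\bd)$ applied to $f=z^2$ yields $2\tau(0)\tau'(0)=0$; as $\tau'(0)\neq 0$ for a disc automorphism, $\tau(0)=0$ and $\tau$ is a rotation, so $(Tf)(z)=f(e^{i\theta}z)$.
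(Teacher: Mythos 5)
Your proof is correct, and it takes a genuinely different route from the paper's. The paper never invokes Rudin's classification of automorphisms of $H^\infty(\bd)$. For $H^\infty_0(\bd)$ it does build essentially your extension (there written $X(\alpha+\beta f)=\alpha+\beta T^{-1}f$), but uses it only to transfer the fact that automorphisms preserve inner functions (Theorem \ref{T1}, proved by a two-variable argument resting on a theorem of Hoffman); it then obtains $Tf=f\circ\tau$ with $\tau=T(id)$ by a hands-on deLeeuw--Rudin--Wermer-style argument: factor $f-f(\tau(z_0))$ through $id-\tau(z_0)$, multiply by powers of $id$ so that every factor lies in the subalgebra, apply $T$, and evaluate at $z_0$; conformality of $\tau$ is proved directly using $T^{-1}$. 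For the Neil algebra the paper stays entirely inside $H^\infty_1(\bd)$: it constructs exactly your $w$ (as $T(id^3)/T(id^2)$, with the same parity-of-zeros argument), shows $w(\bd)\subset\bd$ via invertibility/range preservation plus the open mapping theorem (a step your proof does not need at all), and again extracts $Tf=f\circ w$ by factorization and evaluation, proving $w\in Aut(\bd)$ with the $z^2$/$z^3$ coprimality trick --- the very trick you redeploy to prove multiplicativity. Your Neil-algebra extension $\hat Tf=T\big(f-f'(0)z\big)+f'(0)w$ is the genuinely new ingredient: I checked that multiplicativity does reduce exactly to $T(z\tilde g)=wT(\tilde g)$ for $\tilde g$ vanishing to order at least two (using $wT(f_1)-f_1(0)w=wT(f_1-f_1(0))$), and your $A^2=B^2$, $A^3=B^3$ integral-domain argument settles it (the degenerate case $wT(\tilde g)=0$ is trivial). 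As for what each approach buys: yours is shorter and needs neither inner functions nor Hoffman's theorem, but it outsources the analytic work to Rudin's nontrivial theorem; the paper's argument is self-contained at the level of the subalgebras, yields the inner-function-preservation results as byproducts, and its technique transfers to the isometry classification (Corollary \ref{C3}) and to the algebras $H^{\infty}_{0,1,\ldots,n}(\bd)$. One cosmetic caveat: when proving $\hat T$ bijective, the direction $\hat T S=I$ should be stated via the symmetric identity $\hat T(w')=z$, where $w'$ is the square/cube root built from $T^{-1}$; this follows exactly as your $S(w)=z$ does.
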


Basically, this shows that $H^\infty_0(\bd)$ and $H^\infty_1(\bd)$'s automorphisms are simple (or trivial). Along the way, we prove that (see Theorem \ref{T1}) automorphisms of $H^{\infty}(\bd)$ preserve inner functions. We also prove that (see Corollary \ref{C3}): If $T$ is a surjective linear isometries of $H_1^{\infty}(\mathbb{D})$, then there exist $\alpha \in \bt$ and $\theta \in \mathbb{R}$ such that
\[
Tf(z) = \alpha f(e^{i\theta} z),
\]
for all $f \in H_1^{\infty}(\mathbb{D})$ and $z \in \bd$. The main idea of the proof of the preceding fact is straightforward. It all comes down to showing that $H_1^{\infty}(\mathbb{D})$ can be represented as a uniform algebra.

The second goal of this paper is to describe the structure of generalized tri-circular projections on $H^p(\bd)$ and $H^p(\bd^2)$, $1 \leq p \leq \infty$. The notion of generalized tri-circular projection comes from the ideas of bicircular projections, which also connect the structure of surjective isometries on Banach spaces. We begin by recalling the definition of projections on Banach spaces in order to be more precise. Given a Banach space $X$ (here all Banach spaces are over $\mathbb{C}$), a bounded linear operator $P$ on $X$ is called a \textit{projection} if
\[
P^2 = P.
\]
A projection $P \in \clb(X)$ is called \textit{bicircular projection} if $P+\lambda (I-P)$ is a surjective linear isometry for some $\lambda \in \mathbb{T}$, and it is called \textit{generalized bicircular projection} if there exists $\lambda \in \mathbb{T} \setminus \{1\}$ such that $P+\lambda (I-P)$ is a surjective linear isometry \cite{GBP}.

Projections are fundamental building blocks for more complex operators, but depending on the Banach spaces, they can be adequately complex. On the other hand, surjective linear isometries often help explain operators on Banach and Hilbert spaces \cite{FJ}. Given the preceding and subsequent definitions, we note that, despite the benefits of using surjective linear isometries to construct specific projections, which we will also discuss in this paper, the mechanism itself can be computationally intensive.

Generalized bicircular projections are fully described in some classical Banach spaces. For instance, finite dimensional Banach spaces with respect to various $G$-invariant norms \cite{GBP}, minimal ideals of operators \cite{GBP1}, $JB^*$-triples \cite{GBP3}, certain Hardy spaces \cite{GBP4}, $L^p$-spaces, $1 \leq p< \infty$, $p \neq 2$ \cite{Lin}, etc.

Generalized tri-circular projections, a finer concept of projections, were introduced earlier in \cite{AD}. The present definition, however, is derived from \cite[Definition 1.4]{CJD}.

\begin{definition}
A nonzero projection $P$ on a Banach space $X$ is said to be a generalized tri-circular projection if there exist distinct scalars $\lambda,\mu \in \mathbb{T} \setminus \{1\}$ and nonzero projections $Q, R \in \clb(X)$ such that $P \oplus Q \oplus R = I$ and $P + \lambda Q + \mu R$ is a surjective linear isometry.
\end{definition}

Generalized tri-circular projections and related topics have recently received increased attention. This also has to do with questions about projections and isometries on Banach spaces. Generalized tri-circular projections are completely characterized for some known spaces: $C(X)$ \cite{AD}, $\mathbb{C}^n$ and $M_n(\mathbb{C})$ \cite{AD1}, minimal norm ideals on operator algebra $\mathcal{B}(H)$ \cite{CJD}, $JB^*$-triple \cite{G3P1}, Hilbert $C_0(X)$-modules \cite{ILW}, and the Banach spaces of functions of bounded variation and of absolutely continuous functions \cite{MH1}.

Generally, the solution to a set of projection-related equations is used to classify generalized tri-circular projections. Thanks to \v{C}uka and Ili\v{s}evi\'{c}'s work \cite{CJD} (also see Lemma \ref{lemma: GTP class}), we employ identical methodologies. The classification of generalized tri-circular projections on $H^{p}(\mathbb{D})$ (see Theorem \ref{Th1}) appears to be similar to that of other classes of classical Banach spaces obtained earlier. However, the answer for $H^{p}(\mathbb{D}^2)$ (see Theorem \ref{Th2}) differs significantly, which perhaps highlights the complexity of several complex variables. We also remark that the later result is the first instance of generalized tri-circular projections in several variables.

The remaining part of the paper is organized as follows. Section \ref{sec: aut of H0} deals with the classification of automorphisms of $H^\infty_0(\bd)$. We also prove that automorphisms of $H^{\infty}(\bd)$ preserve inner functions. Section \ref{sec: aut H 1} studies the Neil algebra ${H}_{1}^{\infty}(\mathbb{D})$ and classifies automorphisms of $H^\infty_1(\bd)$. We also classify surjective linear isometries of $H_1^{\infty}(\mathbb{D})$. Section \ref{sec: GP on D} serves as the starting point for the second half of this paper. Here we characterize generalized tri-circular projections on $H^{p}(\mathbb{D})$, $1\leq p \leq \infty$, $p\neq 2$. The final section, Section \ref{sec: gen tri 2-var}, classifies the generalized tri-circular projections on $H^{p}(\mathbb{D}^2)$, $1\leq p \leq \infty$, $p\neq 2$.

\section{Automorphisms of $H_0^{\infty}(\bd)$}\label{sec: aut of H0}

The purpose of this section is to classify the automorphisms of $H^\infty_0(\bd)$. We begin by addressing a natural question that may be familiar to experts. However, we were unable to locate it in the literature. The proof uses a two-variables function theoretic result.

\begin{theorem}\label{T1}
Automorphisms of $H^{\infty}(\bd)$ preserve inner functions.
\end{theorem}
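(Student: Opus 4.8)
The plan is to reduce the statement to a concrete transport problem and then settle it with a reproducing-kernel computation. By Rudin's theorem recalled in the introduction, every automorphism $T$ of $H^{\infty}(\bd)$ has the form $Tf = f\circ\tau$ for some $\tau\in \mathrm{Aut}(\bd)$; thus it suffices to prove that $\vp\circ\tau$ is inner whenever $\vp$ is inner and $\tau$ is a conformal automorphism of $\bd$. First I would record the standard function-theoretic description of inner functions that is stable under the operations at hand: for $\vp\in H^{\infty}(\bd)$ with $\|\vp\|_\infty\le 1$, the following are equivalent --- $\vp$ is inner; the radial boundary values satisfy $|\vp^*|=1$ a.e.\ on $\bt$; and the multiplication operator $M_\vp$ is an isometry on $H^2(\bd)$. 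The equivalence of the last two is immediate from $\|M_\vp f\|_2^2 = \tfrac{1}{2\pi}\int_{\bt}|\vp^*|^2|f^*|^2$ together with $\|\vp\|_\infty\le 1$.

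The two-variable input enters through the Szeg\H{o} kernel $S(z,w)=\tfrac{1}{1-\bar{w} z}$ of $H^2(\bd)$ and its behaviour under $\tau$. Writing $\tau(z)=\lambda\frac{z-a}{1-\bar{a} z}$, I would introduce the weighted composition operator
\[
(U_\tau f)(z) = \frac{(1-|a|^2)^{1/2}}{1-\bar{a} z}\, f(\tau(z)),
\]
and prove it is unitary on $H^2(\bd)$. This rests precisely on the conformal transformation law of the two-variable kernel, namely $\overline{g_\tau(w)}\,S(\tau(z),\tau(w))\,g_\tau(z)=S(z,w)$ with $g_\tau(z)=(1-|a|^2)^{1/2}(1-\bar{a} z)^{-1}$, equivalently on the boundary identity $|g_\tau|^2 = |\tau'|$ on $\bt$ and the change of variables $\eta=\tau(z)$. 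A direct computation using the cocycle relation $g_\tau(z)\,g_{\tau^{-1}}(\tau(z))=1$ then yields the intertwining $M_{\vp\circ\tau}=U_\tau M_\vp U_\tau^{-1}$. Since unitary equivalence preserves the class of isometries, $M_\vp$ being an isometry forces $M_{\vp\circ\tau}$ to be an isometry, so $\vp\circ\tau$ is inner, which is the desired conclusion.

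The main obstacle is the verification that $U_\tau$ is genuinely unitary (not merely bounded and invertible), since this is exactly the step where the two-variable kernel identity is indispensable: the naive similarity $M_{\vp\circ\tau}=C_\tau M_\vp C_\tau^{-1}$ through the unweighted composition operator $C_\tau$ does \emph{not} preserve isometries, because $C_\tau$ is invertible but not unitary on $H^2(\bd)$. A purely boundary-theoretic alternative would instead argue that $(\vp\circ\tau)^* = \vp^*\circ\tau$ a.e.\ and invoke that the real-analytic diffeomorphism $\tau|_{\bt}$ carries null sets to null sets, so that $|\vp^*|=1$ a.e.\ transfers to $|(\vp\circ\tau)^*|=1$ a.e.; here the delicate point is justifying that nontangential boundary limits compose correctly under $\tau$, which is where one must use that $\tau$ extends conformally across $\bt$.
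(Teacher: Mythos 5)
Your proposal is correct, but it takes a genuinely different route from the paper. You first invoke Rudin's classification (which the paper quotes in its introduction) to write every automorphism as $Tf = f\circ\tau$ with $\tau\in Aut(\bd)$, after which the theorem reduces to the standard fact that inner functions are preserved under pre-composition with a disc automorphism; your verification of that fact --- unitarity of the weighted composition operator $U_\tau$ via the Szeg\H{o} kernel transformation law, the intertwining $M_{\vp\circ\tau}=U_\tau M_\vp U_\tau^{-1}$, and the characterization of inner functions by isometric multiplication on $H^2(\bd)$ (stated, correctly, only for $\|\vp\|_\infty\le 1$) --- is sound, as is the boundary-value alternative you sketch. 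The paper deliberately avoids the structure theorem: it argues by contradiction, assuming $|T\vp|<1-\epsilon$ on a set $K\subset\bt$ of positive measure, lifts to the bidisc, invokes Hoffman's theorem to produce $\psi\in H^\infty(\bt^2)$ with $|\psi|=1$ on $K\times\bt$ and $|\psi|=1-\epsilon$ off it, shows by a coefficient argument that the relevant slice $\psi_1$ lies in $H^\infty(\bt)$, and then, choosing $g$ with $Tg=\psi_1$, contradicts the (automatic) isometry of $T$ by comparing $\|\vp g\|_\infty=1$ with $\|T(\vp g)\|_\infty=\|(T\vp)\psi_1\|_\infty\le 1-\epsilon$. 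What your approach buys is brevity: granted Rudin's theorem, the statement is essentially a corollary of a classical one-variable fact. What the paper's approach buys is independence from any classification: its proof uses only that $T$ is a surjective, multiplicative linear isometry, which is exactly why the technique can then be transported (via the extension trick of Lemma 2.2) to $H^\infty_0(\bd)$, where no structure theorem is available in advance --- obtaining such classifications being the very goal of the paper, so proving Theorem 2.1 without presupposing one keeps the development self-contained rather than making the result an observation resting on the deepest available input.
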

\begin{proof}
Let $T$ be an automorphism of $H^{\infty}(\bd)$, and let $\vp \in H^\infty(\bd)$ be an inner function. Assume, contrary to the desired conclusion, that $T\vp$ is not inner. There exists $\epsilon > 0$ such that $m(K)>0$, where
\[
K = \{z \in \mathbb{T} : |T\vp(z)|< 1 - \epsilon\}.
\]
Define the function $T\vp \otimes 1 \in H^{\infty}(\mathbb{T}^2)$ by
\[
(T\vp \otimes 1)(z_1, z_2):= (T\vp) (z_1),
\]
for all $(z_1, z_2) \in \mathbb{T}^2$. Set $K_1 = K \times \mathbb{T}$. Since
\[
|(T\vp \otimes 1)(z_1, z_2)| < 1 - \epsilon,
\]
for all $(z_1, z_2) \in K_1$, it follows that $m(K_1)>0$, and hence $T\vp \otimes 1$ is not inner. Now, in view of $K_1$, there exists $\psi \in H^{\infty}(\mathbb{T}^2)$ such that (cf. \cite[Theorem 5.9, p. 297]{KH})
\[
|\psi (z_1, z_2)| =
\begin{cases}
1 & \mbox{if } (z_1, z_2) \in K_1
\\
1 - \epsilon & \mbox{if } (z_1, z_2) \in \mathbb{T}^2 \setminus K_1.
\end{cases}
\]
Set
\[
K_2 = \{z_1 \in \mathbb{T} : (z_1, z_2)\in K_1 \text{ for some } z_2 \in \mathbb{T}\},
\]
and define a function $\psi_1\in L^{\infty}(\mathbb{T})$ by
\[
\psi_1(z_1) = \psi(z_1, z_2) \qquad (z_1 \in \mathbb{T}).
\]
Clearly, $|\psi_1(z_1)| = |\psi(z_1, z_2)| = 1$ on $K_2$, and $|\psi_1(z_1)| = |\psi(z_1, z_2)| = 1 - \epsilon $ on $\mathbb{T}\setminus K_2$. Since $\psi\in {H}^{\infty}(\mathbb{D}^2)$, we have the power series expansion
\[
\psi_1(z_1) = \psi(z_1, z_2) = \sum_{n, m = 0}^{\infty}a_{nm} z_1^nz_2^m.
\]
Since
\[
\sum_{n, m = 0}^{\infty}|a_{nm}z_2^m|^2 \leq \sum_{n, m = 0}^{\infty}|a_{nm}|^{2}|z_2^m|^{2} = \sum_{n, m = 0}^{\infty}|a_{nm}|^{2}< \infty,
\]
we conclude that $\psi_1\in H^{2}(\mathbb{T})$, and hence $\psi_1\in {H}^{\infty}(\mathbb{T})$. Now there exists $g\in {H}^{\infty}(\mathbb{T})$ such that $Tg = \psi_1$. Then $\|M_{\varphi}g\| = \|\varphi g\|_{\infty} = 1$, but
\[
\|T(\varphi g)\|_{\infty} = \|T(\varphi) T(g)\|_{\infty} = \|(T\varphi)\psi_1\|_{\infty} \leq 1 - \epsilon,
\]
which is a contradiction. Thus $T\varphi$ is an inner function.
\end{proof}

Next, our goal is to prove that automorphisms of $H^{\infty}_0(\bd)$ preserve inner functions. The following simple and general observation is crucial. In view of $f = f(0)+ (f - f(0))$ for all $f \in H^\infty(\bd)$, we write the Banach space direct sum as
\begin{equation}\label{eqn: decomp of H infty}
H^{\infty}(\mathbb{D}) = \mathbb{C} \dotplus H^{\infty}_{0}(\mathbb{D}).
\end{equation}
Fix an automorphism $T$ of $H^\infty_0(\bd)$, and define $X: H^{\infty}(\bd) \raro H^{\infty}(\bd)$ by
\begin{equation}\label{eqn: decomp of X}
X(\alpha + \beta f) = \alpha + \beta T^{-1}f,
\end{equation}
for all $\alpha, \beta \in \mathbb{C}$ and $f\in H^{\infty}_{0}(\mathbb{D})$, and claim that $X$ is an automorphism. Clearly, $X\vert_{H^{\infty}_{0}(\mathbb{D})} = T^{-1}$ and $X(1) = 1$. A routine computation shows that $X$ is linear and multiplicative. We check, for instance, the linearity of $X$: If $\alpha_i, \beta_i, \gamma \in \bc$ and $f_i \in H_0^\infty(\bd)$, $i=1,2$, then
\[
\begin{split}
X((\alpha_1 + \beta_1f_1) + \gamma(\alpha_2 + \beta_2f_2)) & = X(\alpha_1 + \gamma \alpha_2 + \beta_1f_1 + \gamma \beta_2f_2)
\\
& = \alpha_1 + \gamma \alpha_2 + T^{-1}(\beta_1f_1 + \gamma \beta_2f_2)
\\
& = \alpha_1 + T^{-1}(\beta_1f_1)+ \gamma (\alpha_2 +  T^{-1}(\beta_2f_2))\\
&=X(\alpha_1 + \beta_1f_1) + \gamma X(\alpha_2 + \beta_2f_2).
\end{split}
\]
Now we show that $X$ is injective: let $\alpha + \beta T^{-1}f = 0$. If $\beta = 0$, then $\alpha = 0$. Therefore, assume that $\beta \neq 0$. This implies that $\alpha + \beta (T^{-1}f)(z) = 0$ for every $z\in \mathbb{D}$. Since $T^{-1}f(0)=0$, if follows that $\alpha = 0$. Then $T^{-1}f = 0$, and hence $f = 0$. To prove that $X$ is onto, assume that $g\in H^{\infty}(\mathbb{D})$. Since $T^{-1}$ is onto and $g - g(0) \in H^\infty_0(\bd)$, there exists $\tilde{g} \in  H^{\infty}_{0}(\mathbb{D})$ such that
\[
T^{-1}(\tilde{g}) = g - g(0).
\]\
Then $g = g(0) + T^{-1}(\tilde{g}) = X(g(0) + \tilde{g})$, which ends the proof of the claim

\begin{lemma}\label{lemma: 1-var inner pres}
Automorphisms of $H^{\infty}_0(\bd)$ preserve inner functions.
\end{lemma}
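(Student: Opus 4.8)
The plan is to reduce the statement to Theorem~\ref{T1} by transporting the problem from $H^\infty_0(\bd)$ up to the full algebra $H^\infty(\bd)$, using the automorphism $X$ constructed just above. Recall that $X$ is a linear, multiplicative, bijective automorphism of $H^\infty(\bd)$ that fixes the constants and restricts to $T^{-1}$ on $H^\infty_0(\bd)$, where $T$ is the given automorphism of $H^\infty_0(\bd)$. Since $X$ is an automorphism of $H^\infty(\bd)$, so is its inverse $X^{-1}$, and Theorem~\ref{T1} therefore guarantees that $X^{-1}$ carries inner functions to inner functions.

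First I would fix an inner function $\varphi \in H^\infty_0(\bd)$ --- that is, an inner function of $H^\infty(\bd)$ vanishing at the origin --- and apply $X$ to its image $T\varphi$, which again lies in $H^\infty_0(\bd)$. Because $T\varphi \in H^\infty_0(\bd)$, the defining relation $X|_{H^\infty_0(\bd)} = T^{-1}$ yields
\[
X(T\varphi) = T^{-1}(T\varphi) = \varphi,
\]
so that $T\varphi = X^{-1}\varphi$. Since $\varphi$ is inner and $X^{-1}$ preserves inner functions, $T\varphi$ is an inner function of $H^\infty(\bd)$; as it automatically vanishes at the origin, it is the desired inner function of $H^\infty_0(\bd)$, which completes the argument.

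The reasoning is essentially formal once the two ingredients are in place, so I do not expect a genuine analytic obstacle at this stage. The one point to watch is the bookkeeping: the extension $X$ interpolates $T^{-1}$ rather than $T$, so one must invoke Theorem~\ref{T1} for $X^{-1}$ (equivalently, observe that the automorphism $X$ preserves inner functions in \emph{both} directions). The real difficulty has already been absorbed into Theorem~\ref{T1}, whose proof rests on the two-variable function-theoretic construction, and into the verification --- carried out above --- that $X$ is indeed a well-defined automorphism of $H^\infty(\bd)$.
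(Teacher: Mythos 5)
Your proof is correct and follows essentially the same route as the paper: both use the extension $X$ of $T^{-1}$ to an automorphism of $H^{\infty}(\bd)$ together with Theorem~\ref{T1} and the identity $X(T\varphi)=\varphi$. If anything, your version is slightly cleaner, since the paper phrases the argument as a contradiction while invoking Theorem~\ref{T1} for $X$ itself, whereas you correctly observe that the theorem must be applied to the automorphism $X^{-1}$ (whose restriction to $H^{\infty}_0(\bd)$ is $T$), which gives $T\varphi = X^{-1}\varphi$ inner directly.
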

\begin{proof}
Fix an automorphism $T$ of $H^\infty_0(\bd)$, and consider $X$ as defined in \eqref{eqn: decomp of X}. Pick an inner function $\vp \in H^{\infty}_0(\bd)$, and assume on contrary that $T \vp \in H^{\infty}_{0}(\bd)$ is not inner. We know that $X\vert_{H^{\infty}_{0}(\mathbb{D})} = T^{-1}$, and hence by Theorem \ref{T1} we conclude that $X(T\vp) = T^{-1}(T\vp) = \vp$ is inner, which is a contradiction.
\end{proof}

We are now ready for characterizations of the automorphisms of $H^{\infty}_{0}(\mathbb{D})$. Our proof is in the lines of deLeeuw, Rudin, and Wermer \cite{DRW}.

\begin{theorem}\label{thm: auto of H0}
Let $T: H^\infty_0(\bd) \raro H^\infty_0(\bd)$ be a map. Then $T$ is an automorphism if and only if there exists $\theta \in \mathbb{R}$ such that
\[
(Tf)(z) = f(e^{i\theta} z) \qquad (f \in H^\infty_0(\bd), z \in \bd).
\]
\end{theorem}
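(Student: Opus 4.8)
The statement is an equivalence, of which the reverse implication is routine and I would dispose of it first. Given $\theta \in \mathbb{R}$, set $\tau(z) = e^{i\theta}z$; then $\tau \in Aut(\bd)$ fixes the origin, so $f \mapsto f \circ \tau$ carries $H^\infty_0(\bd)$ into itself. This map is linear, multiplicative because $(fg)\circ\tau = (f\circ\tau)(g\circ\tau)$, isometric (hence bounded), and has two-sided inverse $f \mapsto f(e^{-i\theta}z)$; thus it is an automorphism. Moreover the class of maps $f \mapsto f(e^{i\theta}z)$ is closed under inversion, and $T$ is an automorphism precisely when $T^{-1}$ is. Consequently, for the forward implication it suffices to produce the claimed form for $T^{-1}$, which is exactly the restriction $X\vert_{H^\infty_0(\bd)}$.

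For that forward implication the plan is to exploit the extension $X$ from \eqref{eqn: decomp of X}, which was already shown to be an automorphism of the \emph{unital} algebra $H^\infty(\bd)$ with $X(1)=1$ and $X\vert_{H^\infty_0(\bd)} = T^{-1}$. The entire purpose of passing from $T$ to $X$ is that the automorphisms of $H^\infty(\bd)$ are completely classified: by Rudin's theorem quoted in the introduction, there exists $\sigma \in Aut(\bd)$ with $Xg = g \circ \sigma$ for all $g \in H^\infty(\bd)$. It then remains only to pin down $\sigma$.

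To identify $\sigma$ I would evaluate $X$ at the coordinate function $u(z)=z$, which lies in $H^\infty_0(\bd)$. On one hand $X(u) = u\circ\sigma = \sigma$; on the other hand $X$ maps $H^\infty_0(\bd)$ into itself, since $X\vert_{H^\infty_0(\bd)}=T^{-1}$ is a self-map of $H^\infty_0(\bd)$. Hence $\sigma \in H^\infty_0(\bd)$, i.e. $\sigma(0)=0$, and by the Schwarz lemma an automorphism of $\bd$ fixing the origin is a rotation, so $\sigma(z)=e^{i\theta}z$ for some $\theta\in\mathbb{R}$. Therefore $T^{-1}f(z) = (Xf)(z) = f(e^{i\theta}z)$ for every $f \in H^\infty_0(\bd)$, and inverting (equivalently, replacing $\theta$ by $-\theta$) yields $(Tf)(z) = f(e^{-i\theta}z)$, the desired form.

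The only substantive ingredient is Rudin's classification of the automorphisms of $H^\infty(\bd)$ as composition operators with conformal symbols; granting that for $X$, the remainder is the single observation that preservation of $H^\infty_0(\bd)$ forces $\sigma(0)=0$, followed by the Schwarz lemma. I therefore expect the real obstacle to lie not in the present argument but in the legitimacy of the reduction — that is, in confirming that $X$ is a genuine algebra isomorphism of $H^\infty(\bd)$ to which Rudin's result applies (this is precisely what is verified in the discussion preceding Lemma \ref{lemma: 1-var inner pres}). As a cross-check, the inner-function preservation of Lemma \ref{lemma: 1-var inner pres} is consistent with the outcome: $Tu$ must be inner and vanish at the origin, matching $\sigma(0)=0$, which gives an independent confirmation of the identification of $\sigma$.
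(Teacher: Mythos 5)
Your proposal is correct, but it is a genuinely different proof from the one in the paper. The paper never invokes Rudin's classification of automorphisms of $H^\infty(\bd)$ in proving Theorem \ref{thm: auto of H0}; it uses the extension $X$ of \eqref{eqn: decomp of X} only to transfer inner-preservation (Theorem \ref{T1} via Lemma \ref{lemma: 1-var inner pres}), and then runs a self-contained deLeeuw--Rudin--Wermer-style argument: factorizations such as $f^2 - f(\tau(z_0))f = (\vp - \tau(z_0))g$ and $f = \vp\tilde g$ are pushed through $T$ to get $(Tf^4)(z) = f(\tau(z))(Tf^3)(z)$, the analogous identity for $T^{-1}$ yields $\tau\circ\psi = \psi\circ\tau = id$ (so $\tau \in Aut(\bd)$, hence a rotation since $\tau(0)=0$), and a final factorization gives $(Tf)\tau^3 = (f\circ\tau)\tau^3$, i.e.\ $Tf = f\circ\tau$. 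You instead use the same extension $X$ but immediately cash in Rudin's theorem (which the paper does quote in its introduction for $H^\infty(\bd)$) to write $Xg = g\circ\sigma$ with $\sigma \in Aut(\bd)$, then note $\sigma = X(id) = T^{-1}(id) \in H^\infty_0(\bd)$ forces $\sigma(0)=0$, so Schwarz makes $\sigma$ a rotation; inverting finishes. Both steps of yours are sound, since the paper itself verifies that $X$ is an automorphism of $H^\infty(\bd)$. The trade-off: your route is much shorter and renders Theorem \ref{T1} and Lemma \ref{lemma: 1-var inner pres} (with their two-variable Hoffman-type machinery) unnecessary for this particular theorem, but it treats Rudin's deep classification as a black box. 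The paper's route is self-contained modulo inner-preservation, and -- more importantly -- its factorization technique is the one that survives for the Neil algebra in Theorem \ref{T2}, where your reduction breaks down: the complement of $H^\infty_1(\bd)$ in $H^\infty(\bd)$ is $\mathbb{C}z$, which is not a subalgebra (e.g.\ $z\cdot f$ need not lie in $H^\infty_1(\bd) \dotplus \mathbb{C}z$ in a compatible way), so the analogue of $X$ is not multiplicative and Rudin's theorem cannot be imported. So the paper's method buys uniformity across Sections \ref{sec: aut of H0} and \ref{sec: aut H 1}, while yours buys brevity for $H^\infty_0(\bd)$ alone.
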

\begin{proof}
The sufficient part is trivial. For the necessary direction, consider the inner function $\vp \in H^{\infty}_{0}(\mathbb{D})$ defined by $\vp(z) = z$, $z \in \bd$. Then $\tau:=T\vp$ is an inner function (see Lemma \ref{lemma: 1-var inner pres}). Note that $\tau(0) = 0$ forces that $\tau$ is non-constant, which, in turn, yields $|\tau|<1$ on $\mathbb{D}$ (cf. \cite[Theorem 2.2.10]{RAM}). Next, we fix $f\in H^{\infty}_{0}(\mathbb{D})$ and $z_0\in \mathbb{D}$. Since $(f^2 - f(\tau(z_0))f)(\tau (z_0)) = 0$ and $f(0) = 0$, there exist $g, \tilde{g} \in H^\infty(\bd)$ such that
\[
f^2 - f(\tau(z_0)) f = (\vp - \tau(z_0))g,
\]
and
\[
f = \vp \tilde{g}.
\]
Together, the equalities mentioned above imply that
\[
f^3 - f(\tau(z_0)) f^2 = (\vp - \tau(z_0))\vp g \tilde{g}.
\]
Again, in view of $f = \vp \tilde{g}$, this further leads to
\[
f^4 - f(\tau(z_0)) f^3 = (\vp^2 - \tau(z_0)\vp)\vp g \tilde{g}^2.
\]
Applying the operator $T$ to both sides of the above, we find
\[
Tf^4 - f(\tau(z_0)) Tf^3 = T(\vp^2 - \tau(z_0)\vp) T(\vp g \tilde{g}^2).
\]
Since $T(\vp) = \tau$, $T(\vp^2) = \tau^2$ (recall that $T$ is multiplicative) and $(\tau^2 - \tau(z_0) \tau)(z_0) = 0$, by evaluating at $z=z_0$, the above identity yields $(Tf^4)(z_0) = f(\tau(z_0))(Tf^3)(z_0)$. Since $z_0 \in \bd$ was arbitrary, we conclude that
\[
(Tf^4)(z) = f(\tau(z))(Tf^3)(z) \qquad (z \in \bd).
\]
By setting $T^{-1} \vp = \psi$, a similar computation yields the following identity:
\[
(T^{-1}f^4)(z) = f(\psi(z))(T^{-1}f^3)(z) \qquad (z \in \bd).
\]
Writing $\vp^4 = T(T^{-1}(\vp^4))$ and then applying the above identity to $f = T^{-1}\vp$, we find
\[
\begin{split}
\vp^4 & = T((T^{-1}\vp)^{4})
\\
& = (T^{-1}\vp \circ \tau) T(T^{-1} \vp)^3
\\
& = (\psi \circ \tau)T(T^{-1}\vp^3)
\\
& = (\psi \circ \tau) \vp^3.
\end{split}
\]
By the definition of $\vp$, it follows that $\psi \circ \tau(z) = z$, $z\in \mathbb{D}$. Similarly, using $T^{-1}(T\vp)^{4} = (T \vp \circ \psi) T^{-1}(T \vp)^3$, we find $\tau \circ \psi(z) = z$, $z\in \mathbb{D}$, which proves that $\tau$ is a conformal map of $\mathbb{D}$, hence there is a real number $\theta$ such that
\[
\tau(z) = e^{i\theta} z \qquad (z \in \bd).
\]
For a fixed $z_0 \in \bd$, we again observe that
\[
f \vp - f(\tau(z_0))\vp = (\vp - \tau(z_0))h,
\]
for some $h\in H^{\infty}(\mathbb{D})$. By multiplying both sides by $\vp^2$, if follows that
\[
f \vp^3  - f(\tau(z_0))\vp^3 = (\vp^2 - \tau(z_0)\vp) h\vp.
\]
Then $T((\vp^2 - \tau(z_0)\vp))(z_0) = 0$ implies that
\[
(Tf)(z) \tau^3(z) = f(\tau(z))\tau^3(z),
\]
for all $f\in H^{\infty}_{0}(\mathbb{D})$ and $z\in \mathbb{D}$. Using the equality $\tau(z) = e^{i\theta} z$, we conclude that
\[
Tf(z) e^{3i\theta} z = f(e^{i\theta} z)e^{3i\theta}z,
\]
for all $z\in \mathbb{D}$, and hence $Tf = f \circ \tau$. This completes the proof of the theorem.
\end{proof}

\section{Automorphisms of Neil Algebra}\label{sec: aut H 1}

In this section we characterise the automorphism of the Neil Algebra ${H}_{1}^{\infty}(\mathbb{D})$. We also classify surjective linear isometries of ${H}_{1}^{\infty}(\mathbb{D})$. Recall that
\[
H_{1}^{\infty}(\mathbb{D}) = \{f\in {H}^{\infty}(\mathbb{D}): f'(0) = 0\},
\]
is a closed Banach subalgebra of ${H}^{\infty}(\mathbb{D})$.

\begin{theorem}\label{T2}
Let $T : H^\infty_1(\bd) \raro H^\infty_1(\bd)$ be a map. Then $T$ is an automorphism if and only if there exists $\theta \in \mathbb{R}$ such that
\[
(Tf)(z) = f(e^{i\theta} z) \qquad (f \in H^\infty_1(\bd), z \in \bd).
\]
\end{theorem}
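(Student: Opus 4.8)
The sufficiency is again immediate, so I focus on the necessary direction. The plan is to bypass inner functions entirely: the ideal trick behind Lemma \ref{lemma: 1-var inner pres} is unavailable here, since $H^\infty_1(\bd)$ is not an ideal of $H^\infty(\bd)$. Instead I work with the two generators $z^2$ and $z^3$, which lie in $H^\infty_1(\bd)$ and satisfy the cuspidal relation $(z^2)^3=(z^3)^2$. Writing $\tau=T(z^2)$ and $\eta=T(z^3)$, multiplicativity and unitality give $\tau^3=T(z^6)=\eta^2$. Comparing vanishing orders at each point of $\bd$ shows that every zero of $\tau$ has even order and is dominated by the corresponding zero of $\eta$, so $\rho:=\eta/\tau$ is analytic, belongs to $H^\infty(\bd)$, and satisfies $\rho^2=\tau$ and $\rho^3=\eta$.

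First I would record that $T$ is automatically isometric. Since the supremum norm satisfies $\|g^n\|_\infty=\|g\|_\infty^n$, the spectral radius formula gives $\|g\|_\infty=\lim_n\|g^n\|^{1/n}=r(g)$ for every $g\in H^\infty_1(\bd)$; as $T$ is an algebra isomorphism it preserves spectra (using $T(1)=1$), hence $\|Tg\|_\infty=r(Tg)=r(g)=\|g\|_\infty$. In particular $\|\tau\|_\infty=\|z^2\|_\infty=1$, so $|\rho|^2=|\tau|\le1$, and being non-constant, $\rho:\bd\raro\bd$.

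The heart of the argument is to upgrade the identities $T(z^2)=\rho^2$ and $T(z^3)=\rho^3$ from the generators to all of $H^\infty_1(\bd)$, i.e. to prove $Tf=f\circ\rho$; this is the step I expect to be the main obstacle, since polynomials are not dense in the supremum norm and $z^2$ alone cannot separate $w_0$ from $-w_0$. I would argue pointwise, in the spirit of the proof of Theorem \ref{thm: auto of H0}. Fix $f\in H^\infty_1(\bd)$ and $z_0\in\bd$ with $w_0:=\rho(z_0)\neq0$. Then $z+w_0$ and $z^2+w_0z+w_0^2$ are coprime, so the Euclidean algorithm furnishes polynomials $p,q$ with
\[
p(z)(z^2-w_0^2)+q(z)(z^3-w_0^3)=z-w_0.
\]
Writing $h:=f-f(w_0)\in H^\infty_1(\bd)$, which vanishes at $w_0$, as $h=(z-w_0)\tilde h$ with $\tilde h\in H^\infty(\bd)$, and multiplying the identity above by $z^2\tilde h$, I obtain $z^2h=(z^2-w_0^2)\alpha+(z^3-w_0^3)\beta$ with $\alpha,\beta\in z^2H^\infty(\bd)\subseteq H^\infty_1(\bd)$. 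Applying $T$ and evaluating at $z_0$, both factors $T(z^2-w_0^2)=\rho^2-w_0^2$ and $T(z^3-w_0^3)=\rho^3-w_0^3$ vanish at $z_0$, while the left-hand side equals $\rho^2(Tf-f(w_0))$; hence $w_0^2\big((Tf)(z_0)-f(w_0)\big)=0$, so $(Tf)(z_0)=f(w_0)$. As $\rho$ is non-constant its zeros are isolated, and the identity theorem extends $Tf=f\circ\rho$ to all of $\bd$.

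Finally, I would run the same construction on the automorphism $T^{-1}$ to produce $\sigma\in H^\infty(\bd)$ with $\sigma:\bd\raro\bd$ and $T^{-1}g=g\circ\sigma$. Then $f=T(T^{-1}f)=f\circ(\sigma\circ\rho)$ for every $f$; testing on $z^2$ and $z^3$ gives $(\sigma\circ\rho)^2=z^2$ and $(\sigma\circ\rho)^3=z^3$, whence $\sigma\circ\rho=\mathrm{id}$ and symmetrically $\rho\circ\sigma=\mathrm{id}$, so $\rho$ is a conformal automorphism of $\bd$. Since $\rho^2=T(z^2)\in H^\infty_1(\bd)$ forces $(\rho^2)'(0)=2\rho(0)\rho'(0)=0$ while $\rho'(0)\neq0$, we get $\rho(0)=0$, so $\rho(z)=e^{i\theta}z$ and therefore $(Tf)(z)=f(e^{i\theta}z)$, completing the proof.
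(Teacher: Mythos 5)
Your proof is correct, and it follows the same overall skeleton as the paper's: extract $\rho$ from $T(z^2)$ and $T(z^3)$ via the relation $T(z^2)^3=T(z^3)^2$ and a parity argument on zero multiplicities, show $\rho$ maps $\bd$ into $\bd$, prove $Tf=f\circ\rho$ by a pointwise evaluation argument, compose with the analogous map for $T^{-1}$ (testing on $z^2$ and $z^3$) to get $\rho\in Aut(\bd)$, and use $(\rho^2)'(0)=0$ to force $\rho(0)=0$. The two places where your mechanics genuinely differ are both sound. First, for the self-map property you prove $T$ is isometric via the spectral radius formula $\|g\|_\infty=r(g)$ and spectrum preservation; the paper instead notes that automorphisms preserve $\overline{Ran(f)}$ (which is exactly the spectrum in $H^\infty_1(\bd)$) and invokes the open mapping theorem --- the same spectral fact in different clothing. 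Second, for the central step $Tf=f\circ\rho$, you use the Bezout identity $p(z)(z^2-w_0^2)+q(z)(z^3-w_0^3)=z-w_0$ (valid precisely because $w_0\neq 0$ makes $z+w_0$ and $z^2+w_0z+w_0^2$ coprime) and multiply by $z^2\tilde h$ to stay inside $H^\infty_1(\bd)$; the paper instead multiplies the single factorization $f-f(\tau(z_0))=(id-\tau(z_0))g$ by $id^4$ and regroups it as $(id^3-\tau(z_0)\,id^2)(g\,id^2)$, so that applying $T$ and evaluating at $z_0$ annihilates the right-hand side. Both routes must discard an exceptional discrete set before applying the identity theorem (your set $\{\rho=0\}$, the paper's zero set of $\tau^4$), and both exploit that the two generators $z^2,z^3$ are needed to separate $w_0$ from $-w_0$; yours makes that necessity explicit through coprimality, while the paper's regrouping trick is shorter and avoids the Euclidean algorithm. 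Your observation that $T$ is isometric is a small bonus not recorded in the paper's proof of this theorem (it is essentially what drives Corollary \ref{C3}).
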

\begin{proof}
The sufficient condition holds trivially so we only need to prove the necessary condition. Let $f\in {H}^{\infty}_{1}(\mathbb{D})$. Clearly, $\lambda\in \overline{Ran(f)}$ if and only if $f - \lambda$ is not invertible in ${H}^{\infty}_{1}(\mathbb{D})$. Since $T$ is an automorphism, we get $f - \lambda$ is invertible if and only if $Tf - \lambda$ is invertible. Therefore
\[
\overline{Ran(f)} = \overline{Ran(Tf)}.
\]
Consider the identity function $id(z) = z$ for all $z \in \bd$. Then $id^2(z) = z^2$ and $id^3(z) = z^3$. Let $f_2:=T(id^2)$, and $f_3:=T(id^3)$. Since $T$ is an automorphism, $f_2, f_3 \neq 0$. Observe that
\[
T(id^6)=T(id^2)^3=f_2^3=T(id^3)^2=f_3^2.
\]
Let $z_0$ be a zero of $f_2$ of multiplicity $n$. Since $f_2^3 = f_3^2$, it follows that $z_0$ is a zero of $f_3^2$ of multiplicity $3n$ for some $n \geq 1$. In other words, $z_0$ is a zero of $f_3$ of multiplicity $3n/2 \in \mathbb{N}$, and hence $n$ is even. Therefore
\[
\tau := \dfrac{f_3}{f_2} \in \text{Hol}(\bd).
\]
Then, $f_3^2 = \tau^2 f_2^2 = f_2^3$, and hence $f_2=\tau^2$ outside the isolated zeros of $f_2$. By the identity theorem, $f_2=\tau^2$, and so $f_3=\tau^3$ on $\mathbb{D}$. Since $\tau^2=f_2$ is bounded, we conclude that $\tau \in H^\infty(\bd)$. Moreover, we have that $T(id^2) = \tau^2$ and $T(id^3) = \tau^3$. Now
\[
\overline{Ran(\tau^2)} = \overline{Ran(\tau^3)} = \overline{id^2} = \overline{id^3} = \overline{\mathbb{D}}.
\]
By the open mapping theorem, $\tau^2$ and $\tau^3$ are open maps. Thus $\tau^2(\mathbb{D})\subset \mathbb{D}$, and $\tau^3(\mathbb{D})\subset \mathbb{D}$, which implies that $\tau(\mathbb{D})\subset \mathbb{D}$. Next, we claim that $\tau$ is in $Aut(\bd)$. To this end, let $f\in {H}^{\infty}_{1}(\mathbb{D})$ and let $z_0\in\mathbb{D}$. Since $f - f(\tau(z_0))$ vanishes at $\tau(z_0)$, we have
\[
f - f(\tau(z_0)) = (id - \tau(z_0))g,
\]
for some $g \in H^\infty(\bd)$. Multiplying each side by $id^4$ and then applying $T$, we find
\[
T (f \, id^4) - f(\tau(z_0)) T (id^4) = (T (id^3) - T (\tau(z_0)id^2)) T(g \, id^2).
\]
In particular, if $z= z_0$ is arbitrary, then
\[
Tf(z_0)\tau^4(z_0) - f(\tau(z_0)) \tau^4(z_0) = (\tau^3(z_0) - \tau(z_0)\tau^2(z_0))T(g \, id^2)(z_0),
\]
which (after simplification) gives us $Tf(z_0)\tau^4(z_0) - f(\tau(z_0)) \tau^4(z_0) = 0$, and hence
\[
(Tf - f\circ \tau) \tau^4 = 0.
\]
Choose a neighbourhood $N(0,\delta)$ of $0 \in \mathbb{D}$ such that
\[
\overline{N(0,\delta)} \subset \mathbb{D}.
\]
Since $\tau^4 \not\equiv 0$ (note that $T$ is an algebra automorphism), $\tau^4$ has finitely many zeros in $N(0,\delta)$. Thus $Tf-f\circ \tau$ has infinitely many zeros in  $N(0,\delta)\subset \mathbb{D}$. By the identity theorem, we conclude
\[
Tf = f\circ \tau \qquad (f \in {H}^{\infty}_{1}(\mathbb{D})).
\]
Since $T^{-1}$ is also algebra automorphism, by the previous argument, there exists $\psi \in H^\infty(\bd)$ such that
\[
T^{-1}g = g \circ \psi   \qquad (g \in {H}^{\infty}_{1}(\mathbb{D})),
\]
and hence, for each $g \in {H}^{\infty}_{1}(\mathbb{D})$, we have
\[
g = T^{-1}(Tg) = T^{-1}(g \circ \tau) = g \circ (\tau \circ \psi) \qquad (g \in {H}^{\infty}_{1}(\mathbb{D})).
\]
In particular, if $g(z) = z^2$, then $z^2 = (\tau(\psi(z)))^2$, that is
\[
(z - \tau(\psi(z)))(z + \tau(\psi(z))) = 0.
\]
We claim that $\tau(\psi(z)) = z$ for every $z \neq 0$ in $\mathbb{D}$. To show this, first, we observe following the proof of the above equality that $g(z) = z^3$ implies that
\[
(z - \tau(\psi(z))(z^2 + (\tau(\psi(z))^2 + z \tau(\psi(z))) = 0.
\]
If $\tau(\psi(z_0)) = -z_0$ for some nonzero $z_0 \in \mathbb{D}$, then we get $2z_0^3 = 0$, which is a contradiction. This proves the claim that $\tau (\psi(z)) = z$ for every $z \neq 0$ in $\mathbb{D}$. Applying the identity theorem, we finally conclude that $\tau \circ \psi = id$. Using similar argument we get, $\psi \circ \tau = id$. Therefore, we conclude that $\tau$ is a conformal map. Since $f\circ \tau\in {H}^{\infty}_{1}(\mathbb{D})$, we have
\[
f'(\tau(0))\tau'(0) = 0.
\]
Since $\tau^{'}(0) \neq 0$, we get  $f'(\tau(0)) = 0$ for every $f\in{H}^{\infty}_{1}(\mathbb{D})$. Choose, for instance, $f(z) = \frac{z^2}{2}$, and conclude that $\tau(0) = 0$. This completes the proof.
\end{proof}

Now we turn to characterizations of surjective linear isometries of $H_1^{\infty}(\mathbb{D})$. The result is largely a consequence of the fact that $H_1^{\infty}(\mathbb{D})$ is uniform algebra. Denote by $\clm(H^\infty_1(\bd))$ the maximal ideal space of $H^\infty_1(\bd)$. It is clear that $\clm(H^\infty_1(\bd))$ is a complex object, and that the structure of $\clm(H^\infty_1(\bd))$ will be a key factor in many questions regarding the Banach algebra $H^\infty_1(\bd)$. We apply the basic structure of $\clm(H^\infty_1(\bd))$ to prove that $H_1^{\infty}(\mathbb{D})$ is a uniform algebra (just as in the case of $H^{\infty}(\mathbb{D})$).

\begin{corollary}\label{C3}
Let $T$ be a surjective linear isometries of $H_1^{\infty}(\mathbb{D})$. Then there exist $\alpha \in \bt$ and $\theta \in \mathbb{R}$ such that
\[
Tf(z) = \alpha f(e^{i\theta} \tau(z)) \qquad (f \in H_1^{\infty}(\mathbb{D}), \, z \in \bd).
\]
\end{corollary}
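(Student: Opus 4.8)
The plan is to use the fact, set up in the discussion preceding the statement, that $H^\infty_1(\bd)$ is isometrically a uniform algebra, and then to combine the classical description of surjective linear isometries of uniform algebras with the automorphism classification in Theorem \ref{T2}. First I would record that the norm on $H^\infty_1(\bd)$ is the supremum norm, so that $\|f^2\|_\infty = \|f\|_\infty^2$ for every $f \in H^\infty_1(\bd)$; by the standard characterization of uniform algebras among commutative semisimple Banach algebras, the Gelfand transform maps $H^\infty_1(\bd)$ isometrically and isomorphically onto a uniform algebra inside $C(\clm(H^\infty_1(\bd)))$ that separates the points of its maximal ideal space. This is precisely the representation advertised above, and, as the authors indicate, it is the main point of the argument.

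With $H^\infty_1(\bd)$ realized as a uniform algebra, I would invoke the de Leeuw--Rudin--Wermer structure theorem \cite{DRW} for a surjective linear isometry $T$: one can write $Tf = u \cdot (f \circ \phi)$, where $u = T1$ is unimodular on the Shilov boundary and $\phi$ is a homeomorphism whose induced composition operator $C_\phi f = f \circ \phi$ is an algebra automorphism of $H^\infty_1(\bd)$. Since the supremum norm is recovered from boundary values on $\bt$, unimodularity of $u$ on the Shilov boundary says exactly that $u$ is inner.

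The decisive step is to upgrade $u$ to a unimodular constant. Because $C_\phi$ is an automorphism, $C_\phi(H^\infty_1(\bd)) = H^\infty_1(\bd)$, and surjectivity of $T$ together with $Tf = u \cdot C_\phi f$ forces $u \cdot H^\infty_1(\bd) = H^\infty_1(\bd)$; hence $u$ is invertible in $H^\infty_1(\bd)$, and a fortiori in $H^\infty(\bd)$. An invertible inner function is a unimodular constant, so $u = \alpha \in \bt$. Then $\overline{\alpha}\, T = C_\phi$ is an algebra automorphism of $H^\infty_1(\bd)$, and Theorem \ref{T2} identifies it as $f \mapsto f(e^{i\theta} z)$ for some $\theta \in \mathbb{R}$. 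Therefore $Tf(z) = \alpha\, f(e^{i\theta} z)$, which is the asserted form (the conformal factor appearing in the statement is the rotation $z \mapsto e^{i\theta} z$ produced by Theorem \ref{T2}).

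The main obstacle lies in the two boundary-theoretic ingredients: carefully identifying $H^\infty_1(\bd)$ as a uniform algebra, and locating its maximal ideal space and Shilov boundary precisely enough both to quote the isometry structure theorem and to read off that $|u| = 1$ on the boundary makes $u$ inner. Once these are in place the endgame is routine, the only delicate point being the constancy of $u$, which rests on the invertibility argument above together with the rigidity of inner functions.
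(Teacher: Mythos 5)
Your proposal follows the same route as the paper: realize $H_1^{\infty}(\mathbb{D})$ isometrically as a uniform algebra via the Gelfand transform, invoke the de Leeuw--Rudin--Wermer structure theorem \cite[Theorem 3]{DRW} to write $Tf = u \cdot \rho(f)$ with $u = T1$ and $\rho$ an algebra automorphism, and then apply Theorem \ref{T2} to identify $\rho$ as composition with a rotation. In fact you go further than the paper, which compresses everything after the uniform-algebra step into a citation: you correctly notice that one must still show the multiplier $u$ is a unimodular \emph{constant}, and your invertibility observation (surjectivity of $T$ and of $\rho$ forces $u \cdot H_1^{\infty}(\mathbb{D}) = H_1^{\infty}(\mathbb{D})$, so $u$ is invertible in the algebra) is exactly the right ingredient for this.

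The one step that is not justified as written is the passage ``$|u|=1$ on the Shilov boundary, hence $u$ is inner.'' The Shilov boundary here is an abstract compact subset of $\clm(H^\infty_1(\bd))$; translating unimodularity there into unimodular a.e.\ radial limits on $\bt$ requires a concrete identification of the Shilov boundary of $H^\infty_1(\bd)$ (for $H^\infty(\bd)$ this is a nontrivial theorem of Hoffman, and for $H^\infty_1(\bd)$ it is established neither in the paper nor in your argument). Fortunately, you do not need innerness at all: since $u$ is invertible in the uniform algebra and both $\hat{u}$ and $\widehat{u^{-1}} = 1/\hat{u}$ have modulus $1$ on the Shilov boundary, and the Gelfand transform is isometric and attains its norm there, we get $\|u\|_\infty = \|u^{-1}\|_\infty = 1$. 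Evaluating at points of $\bd$ (which are multiplicative functionals) gives $|u(z)| \leq 1$ and $|u(z)|^{-1} \leq 1$, so $|u| \equiv 1$ on $\bd$, and the maximum modulus principle forces $u$ to be a unimodular constant. With this replacement your proof is complete and matches the paper's intended argument; note also that, as you observed, the factor $\tau(z)$ in the stated conclusion should simply read $z$, consistent with the version in the introduction.
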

\begin{proof}
Consider the Gelfand map
\[
\Gamma: H_1^{\infty}(\mathbb{D}) \longrightarrow C(\clm(H^\infty_1(\bd))),
\]
defined by
\[
\Gamma f = \hat f,
\]
where $\hat{f}(\vp)=\vp(f)$ for all $\vp \in \clm(H^\infty_1(\bd))$ and $f \in H_1^{\infty}(\mathbb{D})$. For each $f \in H_1^{\infty}(\mathbb{D})$, we compute (just as in the case of $H^{\infty}(\mathbb{D})$)
\[
\displaystyle\|\hat{f}\|=\sup_{\vp \in \mathcal{M}(H_1^{\infty}(\mathbb{D}))}|\hat{f}(\vp)|=\sup_{\vp \in \mathcal{M}(H_1^{\infty}(\mathbb{D}))}|\vp(f)|\leq \|f\|.
\]
On the other hand, for each $\vp \in \clm(H^\infty_1(\bd))$, we have
\[
\displaystyle\|\hat{f}\|\geq \sup_{\vp_{\lambda} \in \mathcal{M}(H_1^{\infty}(\mathbb{D}))}|\hat{f}(\vp_{\lambda})|=\sup_{\vp_{\lambda} \in \mathcal{M}(H_1^{\infty}(\mathbb{D}))}|\vp_{\lambda}(f)|=\sup_{\lambda\in \mathbb{D}}|f(\lambda)|= \|f\|,
\]
and hence $\|f\|=\|\hat{f}\|$, that is, $\Gamma$ is an isometry. We identify $H_1^{\infty}(\mathbb{D})$ with $\widehat{H_1^{\infty}(\mathbb{D})}:=\Gamma(H_1^{\infty}(\mathbb{D}))$. Let $\vp_1 \neq \vp_2$ be in $\clm(H^\infty_1(\bd))$. Then there exists $f_0 \in H_1^{\infty}(\mathbb{D})$ such that $\vp_1(f_0) \neq \vp_2(f_0)$, that is
\[
\hat{f_0}(\vp_1)\neq \hat{f_0}(\vp_2).
\]
Therefore $\widehat{H_1^{\infty}(\mathbb{D})}$ separates the points of $\mathcal{M}(H_1^{\infty}(\mathbb{D}))$. Hence we conclude that $H_1^{\infty}(\mathbb{D})$ is an uniform algebra. The remainder of the proof now follows from \cite[Theorem 3]{DRW} and Theorem \ref{T2}.
\end{proof}

For each $n \in \mathbb{N}$, define the algebra $H_{0,1,2,\ldots,n}^{\infty}(\mathbb{D})$ as
\[
H_{0,1,2,\ldots,n}^{\infty}(\mathbb{D}) = \{f \in H^\infty(\bd): f^{(j)}(0)=0, j=0, 1, \ldots, n\}.
\]
Suppose $T$ is an automorphism on $H_{0,1,2,\ldots,n}^{\infty}(\mathbb{D})$ onto itself. A similar argument to the one used to prove the preceding theorem implies that
\[
Tf(z) = f(e^{i \theta} z) \qquad (f\in {H}^{\infty}_{0,1,2, \ldots, n}(\mathbb{D})),
\]
for some $\theta \in \mathbb{R}$. A similar statement as in Corollary \ref{C3} also holds true for surjective linear isometries of $H_{1,2,\ldots,n}^{\infty}(\mathbb{D})$.

\section{Generalized projections on $H^{p}(\mathbb{D})$}\label{sec: GP on D}

In this section, we characterize generalized tri-circular projections on $H^{p}(\mathbb{D})$, $1\leq p \leq \infty$, $p\neq 2$. As part of the necessary background, we require two results from the literature. The first one concerns representations of generalized tri-circular projections \cite[Lemma 1.5]{CJD}:

\begin{lemma}\label{lemma: GTP class}
Let $X$ be a Banach space, $P, Q, R, T \in \clb(X)$, and let $\lambda, \mu \in \mathbb{C} \setminus \{1\}$ be distinct scalars. The following conditions are equivalent:
\begin{enumerate}
\item $T = P + \lambda Q + \mu R$ and $P$, $Q$, and $R$ are projections satisfying $P \oplus Q \oplus R = I$.
\item The following holds: $(T - I)(T - \lambda I)(T - \mu I) = 0$ and
\[
P = \frac{(T - \lambda I)(T - \mu I)}{(\lambda - 1)(\mu - 1)}, Q = \frac{(T - I)(T - \mu I)}{(\lambda - 1)(\lambda - \mu)}, R = \frac{(T - I)(T - \lambda I)}{(\mu - 1)(\mu - \lambda)}.
\]
\end{enumerate}
\end{lemma}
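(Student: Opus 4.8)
The plan is to recognize this as the operator-theoretic incarnation of Lagrange interpolation at the three distinct nodes $1, \lambda, \mu$: the scalars play the role of eigenvalues and $P, Q, R$ of the associated spectral projections, while the explicit formulas in condition (2) are precisely the Lagrange basis polynomials
\[
\ell_1(t) = \frac{(t-\lambda)(t-\mu)}{(1-\lambda)(1-\mu)}, \quad \ell_\lambda(t) = \frac{(t-1)(t-\mu)}{(\lambda-1)(\lambda-\mu)}, \quad \ell_\mu(t) = \frac{(t-1)(t-\lambda)}{(\mu-1)(\mu-\lambda)},
\]
evaluated at $T$. These satisfy $\ell_a(b) = \delta_{ab}$ for $a, b \in \{1, \lambda, \mu\}$. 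I would prove the two implications separately, trading operator identities for polynomial identities taken modulo $m(t) := (t-1)(t-\lambda)(t-\mu)$.

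For (1) $\Rightarrow$ (2): from the direct-sum relation $P \oplus Q \oplus R = I$ I read off $P + Q + R = I$ together with the mutual orthogonality $PQ = QP = PR = RP = QR = RQ = 0$ and idempotency $P^2 = P$, $Q^2 = Q$, $R^2 = R$. Substituting $I = P + Q + R$ gives the compact expressions $T - I = (\lambda-1)Q + (\mu-1)R$, $T - \lambda I = (1-\lambda)P + (\mu-\lambda)R$, and $T - \mu I = (1-\mu)P + (\lambda-\mu)Q$. Multiplying the last two and collapsing every cross term via orthogonality leaves $(T - \lambda I)(T - \mu I) = (\lambda-1)(\mu-1)P$, which is the asserted formula for $P$; the formulas for $Q$ and $R$ follow by the same computation after permuting the roles of the nodes. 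Finally $(T - I)P = [(\lambda-1)Q + (\mu-1)R]P = 0$ (again by orthogonality) yields $(T-I)(T-\lambda I)(T-\mu I) = 0$.

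For (2) $\Rightarrow$ (1): the hypothesis is exactly $m(T) = 0$, and $P = \ell_1(T)$, $Q = \ell_\lambda(T)$, $R = \ell_\mu(T)$. I would record three polynomial facts and then evaluate at $T$. First, $\ell_1 + \ell_\lambda + \ell_\mu$ and the constant $1$ are polynomials of degree $\le 2$ agreeing at the three nodes, hence equal identically, giving $P + Q + R = I$; likewise $\ell_1 + \lambda \ell_\lambda + \mu \ell_\mu$ and $t$ agree at the nodes and have degree $\le 2$, giving $T = P + \lambda Q + \mu R$. Second, for $a, b \in \{1, \lambda, \mu\}$ the polynomial $\ell_a \ell_b - \delta_{ab}\ell_a$ vanishes at all three nodes, so by the division algorithm it is a multiple of $m(t)$; evaluating at $T$ and using $m(T) = 0$ yields $P^2 = P$, $Q^2 = Q$, $R^2 = R$ and $PQ = QR = RP = 0$ (with all symmetric products also vanishing). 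These idempotency and orthogonality relations, together with $P + Q + R = I$, deliver the direct-sum decomposition $P \oplus Q \oplus R = I$.

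The computations are routine, so genuine care is needed only in two places. The first is the passage, in (2) $\Rightarrow$ (1), from the algebraic relations (idempotent, pairwise annihilating, summing to $I$) to the topological statement $X = \operatorname{Ran} P \oplus \operatorname{Ran} Q \oplus \operatorname{Ran} R$ encoded by $P \oplus Q \oplus R = I$; this is standard but should be stated explicitly. The second, which I expect to be the main obstacle to make fully rigorous, is the divisibility step: one must justify that a polynomial vanishing at the three distinct nodes $1, \lambda, \mu$ is divisible by $m(t)$. This is exactly where the hypotheses that $\lambda, \mu$ are distinct and both differ from $1$ enter, since otherwise the $\ell_a$ are ill-defined and the entire interpolation argument collapses.
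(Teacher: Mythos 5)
Your proof is correct and complete. One point of comparison worth noting: the paper does not prove this lemma at all --- it is imported verbatim from \v{C}uka and Ili\v{s}evi\'{c} \cite[Lemma 1.5]{CJD} and used as a black box --- so there is no internal proof to measure your argument against; what you have supplied is a genuine, self-contained substitute, and the Lagrange-interpolation viewpoint (reading $P$, $Q$, $R$ as $\ell_1(T)$, $\ell_\lambda(T)$, $\ell_\mu(T)$ and reducing all operator identities modulo $m(t)=(t-1)(t-\lambda)(t-\mu)$) is exactly the right structural way to see it.

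Two small remarks. First, the step you flag as ``the main obstacle to make fully rigorous'' is not an obstacle: a polynomial over $\mathbb{C}$ vanishing at the three distinct points $1,\lambda,\mu$ is divisible by $m(t)$ by three successive applications of the factor theorem (divide out $(t-1)$, note the quotient still vanishes at $\lambda$ since $\lambda\neq 1$, and so on); no further care is needed beyond the distinctness hypothesis you already invoke. Second, your passage from the algebraic relations (idempotency, pairwise annihilation, $P+Q+R=I$) to $P\oplus Q\oplus R=I$ deserves the one-line argument rather than just the label ``standard'': given $p+q+r=0$ with $p\in\operatorname{Ran}P$, $q\in\operatorname{Ran}Q$, $r\in\operatorname{Ran}R$, applying $P$ kills $q$ and $r$ and fixes $p$, so $p=0$, and symmetrically $q=r=0$; together with $P+Q+R=I$ this is precisely the direct-sum statement, and boundedness of the three projections is automatic since they are polynomials in $T\in\clb(X)$.
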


The second tool is classifications of surjective isometries on Hardy spaces \cite[Proposition 2]{NSM}:

\begin{proposition}\label{prop: Nandlal}
Let $1\leq p<\infty$, $p\neq 2$, and let $T \in \clb(H^{p}(\mathbb{D}))$. Then $T$ is a linear surjective isometry if and only if there exists $\tau \in Aut(\bd)$ and a unimodular constant $\alpha$ such that
\[
Tf = \alpha(\tau')^{\frac{1}{p}}f \circ \tau \qquad (f\in H^p({\mathbb{D}})).
\]
\end{proposition}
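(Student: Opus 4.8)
The plan is to prove the two implications separately: the reverse implication is a change-of-variables computation, while the forward implication is the substantive part and follows the circle of ideas in Forelli's theorem on the isometries of $H^p$.

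For the reverse implication, suppose $Tf = \alpha (\tau')^{1/p}(f \circ \tau)$ with $\tau \in \mathrm{Aut}(\mathbb{D})$ and $|\alpha| = 1$. Since $\tau$ is a conformal bijection of $\mathbb{D}$, its derivative $\tau'$ is zero-free on $\mathbb{D}$, so an analytic branch of $(\tau')^{1/p}$ is well defined and $(\tau')^{1/p}(f \circ \tau) \in H^p(\mathbb{D})$. On the boundary $\tau$ is a diffeomorphism of $\mathbb{T}$ with Jacobian $|\tau'|$, so the substitution $w = \tau(z)$ gives
\[
\|Tf\|_p^p = \int_{\mathbb{T}} |\tau'(z)|\,|f(\tau(z))|^p\, dm(z) = \int_{\mathbb{T}} |f(w)|^p\, dm(w) = \|f\|_p^p,
\]
so $T$ is an isometry; the operator of the same shape built from $\tau^{-1}$ and $\bar\alpha$ is a two-sided inverse, so $T$ is onto.

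For the forward implication, let $T$ be a surjective isometry. First I would differentiate the isometric identity $\|u + \zeta v\|_p = \|Tu + \zeta\, Tv\|_p$ in $\zeta \in \mathbb{C}$ at $\zeta = 0$; since $t \mapsto |a+tb|^p$ is differentiable for $1 \le p < \infty$ and nonzero $H^p$ functions vanish only on a null subset of $\mathbb{T}$, this yields preservation of the duality pairing
\[
\int_{\mathbb{T}} |u|^{p-2}\bar u\, v\, dm = \int_{\mathbb{T}} |Tu|^{p-2}\overline{Tu}\, Tv\, dm \qquad (u,v \in H^p(\mathbb{D})).
\]
Taking $u \equiv 1$ and writing $w := T1$ isolates a representing identity $v(0) = \int_{\mathbb{T}} |w|^{p-2}\bar w\,(Tv)\, dm$ valid for all $v$. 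The decisive step is then to upgrade this to weighted-composition form: writing $e(z) = z$ for the coordinate function, one shows there is a map $\phi$ with $T(e^n) = w\,\phi^n$ for every $n \ge 0$, whence $Tf = w\cdot(f\circ\phi)$ on polynomials and, by density, on all of $H^p(\mathbb{D})$.

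This upgrade is the \emph{main obstacle}, and it is exactly where the hypothesis $p \neq 2$ is indispensable. For $p = 2$ the displayed identity merely expresses preservation of the Hilbert inner product, so every unitary of $H^2(\mathbb{D})$ is a surjective isometry and the weighted-composition conclusion is simply false; the composition structure is a genuinely non-Hilbertian phenomenon. For $p \neq 2$, by contrast, the nonlinearity of $t \mapsto |t|^p$ produces rigid higher-order constraints: comparing the full real-analytic expansions in $\zeta,\bar\zeta$ of $\|1 + \zeta e^n\|_p^p = \|w + \zeta\,T(e^n)\|_p^p$ term by term pins down $T(e^n) = w\,\phi^n$. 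Granting $Tf = w\,(f\circ\phi)$, the remainder is bookkeeping: testing the isometry condition on monomials forces $\int_{\mathbb{T}} |w|^p\,|f\circ\phi|^p\, dm = \int_{\mathbb{T}} |f|^p\, dm$, which shows $\phi$ is inner and identifies $|w|^p\, dm$ with the pushforward of $dm$ by $\phi$. Surjectivity of $T$ makes $\phi$ an invertible self-map of $\mathbb{D}$, hence a conformal automorphism $\tau \in \mathrm{Aut}(\mathbb{D})$; the measure identity then gives $|w| = |\tau'|^{1/p}$ a.e. on $\mathbb{T}$, and since $w \in H^p(\mathbb{D})$ is zero-free with this modulus, analyticity yields $w = \alpha(\tau')^{1/p}$ for a unimodular constant $\alpha$, which is the asserted form.
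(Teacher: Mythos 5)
First, a point of comparison: the paper does not prove this proposition at all --- it is quoted from the literature (it is Proposition 2 of Lal--Merrill \cite{NSM}, and in substance it is Forelli's theorem on isometries of $H^p$, plus the surjective refinement). So your attempt is really a reconstruction of Forelli's theorem, and must be judged on its own. Your reverse implication is fine: the boundary change of variables, and the operator of the same shape built from $\tau^{-1}$, which is a two-sided inverse up to a harmless unimodular constant coming from the branch choices of the $p$-th roots.

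The forward implication, however, has a genuine gap exactly at the step you yourself flag as decisive. Comparing the expansions of $\|1+\zeta e^n\|_p^p=\|w+\zeta T(e^n)\|_p^p$ \emph{one $n$ at a time} cannot produce a single map $\phi$ with $T(e^n)=w\phi^n$: for fixed $n$ these identities only say that $T(e^n)/w$ is uniformly distributed on $\mathbb{T}$ with respect to $|w|^p\,dm$ (indeed the left-hand side is the same function of $\zeta$ for every $n\geq 1$), and they impose no relation whatsoever between $T(e^m)$ and $T(e^n)$ for $m\neq n$. To link different $n$ one must perturb by two functions simultaneously, expanding $\|1+\zeta e^m+\eta e^n\|_p^p=\|w+\zeta T(e^m)+\eta T(e^n)\|_p^p$ in $(\zeta,\bar\zeta,\eta,\bar\eta)$. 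Forelli's actual mechanism extracts from this the second-order identity $\int f\bar g\,dm=\int Tf\,\overline{Tg}\,|w|^{p-2}dm$ and the mixed fourth-order identities, whose coefficient is essentially $\binom{p/2}{2}$ --- nonzero precisely because $p\neq 2$ --- and then deduces $T(e^n)/w=\bigl(T(e)/w\bigr)^n$ from the equality case of Cauchy--Schwarz. This routing is not optional: for $p$ an even integer $\geq 4$ (allowed here), $|1+h|^p$ is a \emph{polynomial} in $h,\bar h$, so your ``full real-analytic expansion term by term'' yields only finitely many moment identities and full equimeasurability genuinely fails; only the low-order identities survive, and they suffice only via the Cauchy--Schwarz argument. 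Two further gaps: the analyticity and innerness of $\phi$, which you introduce only as a boundary ratio, must be proven rather than asserted; and in the last step, ``$w$ zero-free with modulus $|\tau'|^{1/p}$'' does not yield $w=\alpha(\tau')^{1/p}$, since a singular inner factor is also zero-free --- you need $w$ to be \emph{outer}, which is another place where surjectivity (for instance, the constant function $1$ lying in the range of $T$) must be invoked.
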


In what follows, for any $\tau \in Aut(\bd)$ we denote
\[
\tau_0 = (\tau')^{\frac{1}{p}}, \; \tau_1 = (\tau' \circ \tau)^{\frac{1}{p}}, \text{ and } \tau_2 = (\tau' \circ \tau^2)^{\frac{1}{p}}.
\]
Moreover, define $id \in Aut(\bd)$ by
\[
id(z) = z \qquad (z \in \bd).
\]
We are now ready for the classification of generalized tri-circular projections. The Lagrange polynomials are an integral part of the proof presented below, which is also typical for comparable results in other Banach spaces.

\begin{theorem}\label{Th1}
Let $1\leq p \leq \infty$, $p\neq 2$. $P \in \clb(H^{p}(\mathbb{D}))$ is a generalized tri-circular projection if and only if there exists a surjective linear isometry $T \in \clb(H^{p}(\mathbb{D}))$ such that
\begin{enumerate}
\item $T^3 = I$, and
\item $T = P + \lambda Q + \lambda^2 R$ for some nontrivial projection $Q, R \in \clb(H^{p}(\mathbb{D}))$ and a cube root of unity $\lambda$.
\end{enumerate}
Moreover, $P = \frac{1}{3}(I + T + T^2), Q = \frac{1}{3}(I + \lambda^2 T + \lambda T^2)$, and $R =  \frac{1}{3}(I + \lambda T + \lambda^2 T^2)$.
\end{theorem}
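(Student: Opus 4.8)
The plan is to reduce the whole statement to the description of surjective isometries of $H^p(\bd)$ (Proposition \ref{prop: Nandlal} for $1\leq p<\infty$, and the corresponding de Leeuw--Rudin--Wermer description for $p=\infty$, where $H^\infty(\bd)$ is a uniform algebra and the isometries are $f\mapsto \alpha\, f\circ\tau$, i.e.\ the weight is trivial) together with the algebraic dictionary of Lemma \ref{lemma: GTP class}. For the necessity direction, suppose $P$ is a generalized tri-circular projection, so there are distinct $\lambda,\mu\in\mathbb{T}\setminus\{1\}$ and nonzero projections $Q,R$ with $P\oplus Q\oplus R=I$ and $T:=P+\lambda Q+\mu R$ a surjective isometry. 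By Lemma \ref{lemma: GTP class} we have $(T-I)(T-\lambda I)(T-\mu I)=0$, and since $P,Q,R$ are all nonzero, each of $1,\lambda,\mu$ is genuinely an eigenvalue of $T$. Writing $T$ as a weighted composition operator $Tf=\alpha\,(\tau')^{1/p}(f\circ\tau)$ with $\tau\in Aut(\bd)$ and $\alpha\in\mathbb{T}$, an induction via the chain rule gives, in the notation $\tau_0,\tau_1,\tau_2$ of the paper,
\[
T^2 f=\alpha^2\,\tau_0\tau_1\,(f\circ\tau^2),\qquad T^3 f=\alpha^3\,\tau_0\tau_1\tau_2\,(f\circ\tau^3),
\]
where $\tau_0\tau_1\tau_2=\big((\tau^3)'\big)^{1/p}$ up to a constant $p$-th root of unity.

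The central step, which I expect to be the main obstacle, is to show $\tau^3=id$. I would expand the cubic relation as an operator identity, apply it to an arbitrary $f$, and evaluate pointwise. If $\tau^3(z_0)\notin\{z_0,\tau(z_0),\tau^2(z_0)\}$ for some $z_0\in\bd$, I would test against the polynomial $f(z)=(z-z_0)(z-\tau(z_0))(z-\tau^2(z_0))$, which lies in every $H^p(\bd)$ and vanishes on $\{z_0,\tau(z_0),\tau^2(z_0)\}$ but not at $\tau^3(z_0)$; evaluating at $z_0$ then forces $\alpha^3(\tau_0\tau_1\tau_2)(z_0)\,f(\tau^3(z_0))=0$, impossible since $\alpha$ and the weight are nonvanishing. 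Hence $\tau^3(z)\in\{z,\tau(z),\tau^2(z)\}$ for every $z$, and the identity theorem forces $\tau^3$ to equal $id$, $\tau$, or $\tau^2$ identically. The cases $\tau^3=\tau^2$ and $\tau^3=\tau$ reduce to $\tau=id$ and $\tau^2=id$ respectively; but $\tau=id$ makes $T=\alpha I$ (one eigenvalue), while $\tau^2=id$ makes $T^2$ a scalar multiple of $I$ (at most two eigenvalues), each contradicting the presence of the three distinct eigenvalues $1,\lambda,\mu$. Therefore $\tau$ has order exactly three. The delicate points here are this order-forcing argument and the careful bookkeeping of the non-uniqueness of $(\tau')^{1/p}$.

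Once $\tau^3=id$ is established, $T^3=\beta I$ for some $\beta\in\mathbb{T}$; since $1$ is an eigenvalue of $T$, necessarily $\beta=1$, which gives $T^3=I$, i.e.\ condition $(1)$. The three distinct eigenvalues $1,\lambda,\mu$ must then be exactly the cube roots of unity, so $\lambda$ is primitive and $\mu=\lambda^2$, giving condition $(2)$. Finally, substituting $\mu=\lambda^2$ into the explicit expressions of Lemma \ref{lemma: GTP class} and simplifying with the relations $1+\lambda+\lambda^2=0$ and $\lambda^3=1$ yields $P=\tfrac13(I+T+T^2)$, $Q=\tfrac13(I+\lambda^2T+\lambda T^2)$, and $R=\tfrac13(I+\lambda T+\lambda^2T^2)$, which is the ``moreover'' part.

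The sufficiency direction is routine: given a surjective isometry $T$ with $T^3=I$ and $T=P+\lambda Q+\lambda^2 R$ for nonzero projections $Q,R$ and a cube root of unity $\lambda$ (which must be primitive, for otherwise $Q$ and $R$ degenerate), the scalars $\lambda$ and $\lambda^2$ are distinct elements of $\mathbb{T}\setminus\{1\}$; Lemma \ref{lemma: GTP class} then guarantees that $P,Q,R$ are projections with $P\oplus Q\oplus R=I$, so $P$ is a generalized tri-circular projection by definition. I would close by remarking that the $p=\infty$ instance runs identically, with the weights $\tau_0,\tau_1,\tau_2$ replaced by $1$.
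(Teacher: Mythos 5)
Your proposal is correct, and up to the trichotomy step it follows the paper's skeleton: both arguments combine Lemma \ref{lemma: GTP class} with Proposition \ref{prop: Nandlal} (and the deLeeuw--Rudin--Wermer theorem \cite{DRW} when $p=\infty$), write $T$ as a weighted composition operator, and test the resulting cubic operator identity against polynomials vanishing at $z_0,\tau(z_0),\tau^2(z_0)$ to force $\tau=id$, $\tau^2=id$, or $\tau^3=id$. Where you genuinely diverge is in how the cases are settled. The paper substitutes $f=1$, $f=id$, $f=id^2$ into the identity, extracts scalar relations such as $1+a=0$, $a+b=0$, or $(1+\lambda_1)(1+\lambda_2)(\lambda_1+\lambda_2)=0$, and then runs subcases ($\lambda_1=-1$, $\lambda_2=-1$, $\lambda_1=-\lambda_2$), showing in each that one of $P,Q,R$ vanishes. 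You instead note that $P,Q,R\neq 0$ makes $1,\lambda,\mu$ three distinct eigenvalues of $T$, and that when $\tau^k=id$ ($k=2$ or $3$) the weight $\tau_0\tau_1$ (resp.\ $\tau_0\tau_1\tau_2$) is analytic of constant modulus $|(\tau^k)'|^{1/p}\equiv 1$, hence a unimodular constant, so $T^k$ is scalar; eigenvalue counting then kills $\tau=id$ and $\tau^2=id$ in one stroke, and in the surviving case yields $T^3=\beta I$, $\beta=1$, and $\{1,\lambda,\mu\}=\{1,\lambda,\lambda^2\}$ with $\lambda$ a primitive cube root of unity. Your route is shorter and conceptually cleaner. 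What the paper's computational route buys is its collection of intermediate identities (notably $b=-(1+a)\alpha^2\tau_0\tau_1$ and $\alpha^2\tau_0\tau_1=-(a+b)$ from its Case II), which are explicitly recycled in the bidisc argument of Theorem \ref{Th2}; there the analogue of the case $\tau^2=id$ is \emph{not} degenerate, because the extra unimodular symbol $\sigma$ prevents $T^2$ from being scalar, and your spectral shortcut would not transfer. Two points you should still tighten: (i) write out the constancy claim for the weights (maximum modulus principle), since this is precisely the branch bookkeeping for $(\tau')^{1/p}$ that you flag; (ii) in the sufficiency direction, Lemma \ref{lemma: GTP class} only applies if $P,Q,R$ are the spectral projections given by its formulas (equivalently, if $P\oplus Q\oplus R=I$ is part of the hypothesis), so condition (2) of the theorem must be read that way and with $\lambda$ primitive --- a gloss the paper makes silently, as its proof never addresses sufficiency at all.
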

\begin{proof}
First, we assume that $1\leq p < \infty$. Suppose $P \in \clb(H^{p}(\mathbb{D}))$ is a generalized tri-circular projection. By the definition, there exist distinct scalars $\lambda_1, \lambda_2\in \mathbb{T}\setminus \{1\}$ and nonzero projections $Q$ and $R$ on $H^{p}(\mathbb{D})$ such that $P \oplus Q \oplus R = I$ and $T:=P + \lambda_1 Q + \lambda_2 R$ is a surjective linear isometry. By Lemma \ref{lemma: GTP class}, we write
\begin{equation}\label{g1}
P = \frac{(T - \lambda_{1}I) (T - \lambda_2I)}{(1 - \lambda_1)(1 - \lambda_2)}, Q = \frac{(T - I)(T - \lambda_2I)}{(\lambda_1 - 1)(\lambda_1 - \lambda_2)}, R = \frac{(T - I)(T - \lambda_1 I)}{(\lambda_2 - 1)(\lambda_2 - \lambda_1)}
\end{equation}
and
\begin{equation}\label{g2}
T^3 - (1 + a)T^2 + (a + b)T - bI = 0,
\end{equation}
where
\[
a = \lambda_1 + \lambda_2 \text{ and } b = \lambda_1\lambda_2.
\]
By Proposition \ref{prop: Nandlal}, there exist $\tau \in Aut(\bd)$ and a unimodular constant $\alpha$ such that
\[
Tf = \alpha \tau_0 f \circ \tau,
\]
for all $f\in H^p({\mathbb{D}})$. Then, for each $f\in H^p({\mathbb{D}})$, we have
\begin{equation}\label{eqn: T2 and T3}
T^2 f = \alpha^2 \tau_0 \tau_1 f\circ\tau^2 \text{ and } T^3f = \alpha^3 \tau_0 \tau_1 \tau_2 f\circ\tau^3,
\end{equation}
and hence by (\ref{g2})
\begin{align}\label{g3}
\alpha^3 \tau_0 \tau_1 \tau_2 f\circ\tau^3 - (1 + a)\alpha^2 \tau_0 \tau_1 f\circ\tau^2 + (a + b)\alpha \tau_0 f\circ\tau - bf = 0.
\end{align}
We claim that there are only three possible options:
\begin{enumerate}
\item $\tau = id$,
\item $\tau \neq id$ and $\tau^2 = id$,
\item $\tau, \tau^2 \neq id$ and $\tau^3 = id$.
\end{enumerate}
Let us assume the contrary. Suppose $id \neq \tau, \tau^2, \tau^3$. Since $\tau, \tau^2,~ \mbox{and}~ \tau^3$ are analytic functions, there exists $z_0 \in \bd$ such that $\{z_0, \tau(z_0), \tau^2(z_0), \tau^3(z_0)\}$ is a set of distinct scalars. Consider a Lagrange polynomial $L$ such that
\[
L(\tau(z_0)) = L(\tau^2(z_0)) = L(\tau^3(z_0)) = 0 \text{ and } L(z_0) = 1.
\]
Applying \eqref{g3} to $f = L$ and at $z = z_0$, we obtain
\[
\begin{split}
0 & = \tau_0(z_0) \{\alpha^3 \tau_1(z_0) \tau_2(z_0) L(\tau(z_0)^3) - (1+a) \tau_1(z_0) L(\tau(z_0)^2) + (a+b) \alpha L(\tau(z_0))
\\
& \quad + (a+b) \alpha L(\tau(z_0))\} - b L(z_0)
\\
& = -b,
\end{split}
\]
and hence $\lambda_1 \lambda_2 = 0$, which gives a contradiction to the fact that both $\lambda_1$ and $\lambda_2$ are nonzero, and proves the claim.

We will now examine each of the three cases separately. First, we consider the nontrivial case (as the other two cases would be shown to be redundant):

\noindent \textbf{Case I:} $\tau, \tau^2 \neq id$ and $\tau^3 = id$. In particular, for $\tau^3 = id$, \eqref{g3} yields
\begin{align}\label{g4}
\alpha^3 \tau_0 \tau_1 \tau_2 f\circ id - (1 + a)\alpha^2 \tau_0 \tau_1 f\circ\tau^2 + (a + b)\alpha \tau_0 f\circ\tau - bf = 0.
\end{align}
Applying this to $f=1$, we obtain
\begin{align}\label{g5}
\alpha^3 \tau_0 \tau_1 \tau_2 = (1 + a)\alpha^2 \tau_0 \tau_1 - (a + b)\alpha \tau_0 + b,
\end{align}
which, applied to \eqref{g4} further yields
\[
(1+a) \alpha^2 \tau_0 \tau_1 (f \circ id - f \circ \tau^2) - (a+b) \alpha \tau_0 (f \circ id - f \circ \tau) + b (f \circ id - f) = 0.
\]
In particular, if $f = id$, then the above identity implies
\begin{equation}\label{eqn: f=id}
(1+a) \alpha^2 \tau_0 \tau_1 (id - \tau^2) = (a+b) \alpha \tau_0 (id - \tau),
\end{equation}
and for $f = id^2$, it yields
\begin{equation}\label{eqn: f=id2}
(1+a) \alpha^2 \tau_0 \tau_1 (id^2 - (\tau^2)^2) - (a+b) \alpha \tau_0 (id^2 - \tau^2) = 0.
\end{equation}
Applying the first identity to the latter identity, we find
\[
(a+b) \alpha \tau_0 (id - \tau) (id + \tau^2) - (a+b) \alpha \tau_0 (id^2 - \tau^2) = 0.
\]
Since $\alpha \tau_0 \neq 0$, it follows that
\[
(a+b) (id - \tau) (\tau^2 - \tau) = 0.
\]
As we know that $id \neq \tau$ and $\tau^2 \neq \tau$, we finally have that $a+b = 0$, that is
\[
\lambda_1 + \lambda_2 + \lambda_1 \lambda_2 = 0.
\]
Finally, plugging the right side of \eqref{eqn: f=id} into \eqref{eqn: f=id2} and noting the fact that $\alpha \tau_0 \tau_1 \neq 0$, we see that
\[
(1+a) (id - \tau^2)(\tau^2 - \tau) = 0,
\]
and hence $1 + a = 0$. Consequently, $1 + \lambda_1 + \lambda_2 = 0$. This together with $\lambda_1 + \lambda_2 + \lambda_1 \lambda_2 = 0$ imply that $\lambda_1 = \lambda$ and $\lambda_{2} = \lambda^2$, where $\lambda$ is cube root of unity. Therefore, by \eqref{g1} and \eqref{g2}, it follows that $T^3 = I$ and
\[
P = \frac{I + T + T^2}{3},  Q = \frac{I + \lambda^2 T + \lambda T^2}{3}, \text{ and } R =  \frac{I + \lambda T + \lambda^2 T^2}{3}.
\]

\noindent \textbf{Case II:} Let $\tau \neq id$ and $\tau^2 = id$. Note in particular that $\tau_2 = \tau_0$ (recall that $\tau_2 = (\tau' \circ \tau^2)^{\frac{1}{p}}$). By \eqref{g3}, we have
\[
\alpha^3 (\tau_0^2 \tau_1) f \circ \tau - (1+a) \alpha^2 \tau_0 \tau_1 f \circ id + (a+b) \alpha \tau_0 f \circ \tau - bf = 0,
\]
for all $f \in H^p(\bd)$. In particular, if $f = 1$, we have
\begin{equation}\label{eqn: eqn C2 1}
\alpha^3 (\tau_0^2 \tau_1) = (1+a) \alpha^2 \tau_0 \tau_1 - (a+b) \alpha \tau_0 + b,
\end{equation}
and, for $f = id$, we obtain
\begin{equation}\label{eqn: eqn C2 2}
\alpha^3 (\tau_0^2 \tau_1) \tau - (1+a) \alpha^2 \tau_0 \tau_1 id + (a+b) \alpha \tau_0 \tau - b id = 0.
\end{equation}
In view of \eqref{eqn: eqn C2 1}, the latter identity implies
\[
(1+a) \alpha^2 \tau_0 \tau_1 (\tau - id) + b (\tau - id) = 0.
\]
 Since $\tau \neq id$ is analytic we have
\begin{equation}\label{eqn: b in 1}
 b = - (1+a) \alpha^2 \tau_0  \tau_1.
\end{equation}
Again, by \eqref{eqn: eqn C2 1}, we have $(1+a) \alpha^2 \tau_0 \tau_1 = \alpha^3 (\tau_0^2 \tau_1)+(a+b)\alpha \tau_0 - b$. Applying this to \eqref{eqn: eqn C2 2}, we have
\[
\alpha^3 (\tau_0^2 \tau_1) \tau - (\alpha^3 (\tau_0^2 \tau_1) + (a+b) \alpha \tau_0 - b) id + (a+b) \alpha \tau_0 \tau - b id = 0,
\]
that is
\[
\alpha^3 (\tau_0^2 \tau_1) (\tau - id) + (a+b) \alpha \tau_0 (\tau - id) = 0.
\]
Again, analyticity of $\tau \neq id$ gives us
\begin{equation}\label{eqn: alpha tau 0 and 1}
 \alpha^2 \tau_0  \tau_1 = -(a+b).
\end{equation}
This along with $ b = - (1+a) \alpha^2 \tau_0 \tau_1$ yield
\[
(1+a)(a+b) - b = 0.
\]
Simplifying this in view of $a = \lambda_1 + \lambda_2$ and $b = \lambda_1 \lambda_2$, we obtain
\[
(1 + \lambda_{1})(1 + \lambda_{2})(\lambda_1 + \lambda_2) = 0.
\]
Let $\lambda_1 = -1$: Then,
\[
\alpha^2 \tau_0 \tau_1= -(a + b) = -(\lambda_1 + \lambda_2 + \lambda_1\lambda_2) = 1,
\]
and hence, by \eqref{eqn: T2 and T3}, we conclude $T^2f = \alpha^2 \tau_0 \tau_1 f\circ\tau^2 = f$, for every $f\in H^p(\mathbb{D})$, that is, $T^2 = I$. Then \eqref{g1} implies that $R=0$. An analogous calculation results in: $\lambda_2 = -1$ implies $Q=0$, and $\lambda_1 = -\lambda_2$ yields $P = 0$. As a result, this case is redundant.

\noindent \textbf{Case III:} $\tau = id$. It follows that $\tau_0 = \tau_1 = \tau_2 \equiv 1$. The identity in \eqref{g3} yields
\[
\alpha^3 f - (1 + a)\alpha^2 f + (a + b)\alpha f - bf = 0,
\]
for all $f \in H^p(\bd)$. In particular, for $f = 1$, we have $\alpha^3 - (1 + a)\alpha^2 + (a + b)\alpha - b = 0$, which implies that $\alpha = 1, \lambda_1, \lambda_2$ (recall that $a = \lambda_1 + \lambda_2$ and $b = \lambda_1 \lambda_2$).

\noindent If $\alpha = 1$, then \eqref{eqn: T2 and T3} implies that $T = I$, and hence $Q= R=0$. Similarly, if $\alpha = \lambda_1$, then $Tf=\lambda_1f$. Hence, we have $P = 0$. Finally, suppose $\alpha = \lambda_2$. Then $Tf = \alpha f = \lambda_2f$ for every $f\in H^p(\mathbb{D})$. Therefore, $T = \lambda_2 I$, and consequently $P = 0$. As a result, this case is also not feasible. This completes the proof of the theorem for $1 \leq p < \infty$, $p \neq 2$.

\noindent Now we turn to $p=\infty$. In this case, the proof is identical to the previous case, but the structure of surjective linear isometries of $H^{\infty}(\mathbb{D})$ must be used. In other words, we need to use the bounded analytic functions version of Proposition \ref{prop: Nandlal}, which can be found in \cite[Theorem 1]{DRW}: An operator $X \in \clb(H^{\infty}(\mathbb{D}))$ is a surjective linear isometry if and only if there exist $\alpha \in \bt$ and $\tau \in Aut(\bt)$ such that
\[
Xf = \alpha (f\circ \tau) \qquad (f\in H^{\infty}(\mathbb{D})).
\]
In view of the above representation, the proof for $p = \infty$ case is identical to the proof for the previous case, and we omit the details. This completes the proof of the theorem.
\end{proof}

Some of the identities established in the preceding theorem will serve as the foundation for additional calculations in the following section.

\section{Generalized projections on $H^{p}(\mathbb{D}^2)$}\label{sec: gen tri 2-var}

The representations of generalized tri-circular projections on $H^{p}(\mathbb{D})$ as obtained in Theorem \ref{Th1} are common among known generalized tri-circular projections acting on Banach spaces. However, in this section, we will see that the structure of generalized tri-circular projections on $H^p(\bd^2)$ is richer.

In what follows, we always assume that $1\leq p \leq \infty$, $p\neq 2$. First, we recall a classification of surjective linear isometries of $H^p(\bd^2)$ \cite[Theorems 1 and 3]{NSM}: $T \in \clb({H}^p(\bt^2))$ is an isometry if and only if there exist $\tau \in Aut(\bd)$, unimodular function $\sigma \in L^\infty(\bt)$, and $\alpha \in \bt$ such that
\begin{equation}\label{eqn: T in 2 vari}
(Tf)(z, w) = \alpha (\tau'(z))^{\frac{1}{p}} f(\tau(z), w \sigma(z)),
\end{equation}
whenever $1\leq p<\infty$, and, if $p = \infty$, then
\begin{equation}\label{eqn: infty on D2}
(Tf)(z, w) = \alpha f(\tau(z), w \sigma(z)),
\end{equation}
for all $f\in {H}^p(\mathbb{T}^2)$, and $z, w \in \bt$.

To ease notation, as in Section \ref{sec: GP on D}, for $\tau \in Aut(\bd)$ and $\sigma \in L^\infty(\bt)$, define
\[
\tau_0(z) = (\tau'(z))^{\frac{1}{p}}, \; \tau_1(z) = (\tau'\circ \tau (z))^{\frac{1}{p}}, \text{ and } \tau_2(z) = (\tau' \circ \tau^2 (z))^{\frac{1}{p}},
\]
and also
\[
\sigma_1(z) = \sigma \circ \tau(z) \text{ and } \sigma_2(z) = \sigma \circ \tau^2(z),
\]
for all $z \in \bt$. However, in what follows, we consider all the above functions (including both $\tau$ and $\sigma$) in two variables, but as functions of $z$ alone. For simplicity of notation, we often write composition of function $f^2$ instead of $f \circ f$ (whenever it make sense). Now we are ready for the characterizations of generalized tri-circular projections on $H^{p}(\mathbb{T}^2)$.

\begin{theorem}\label{Th2}
Let $P \in \clb(H^{p}(\mathbb{T}^2))$. Then $P$ is a generalized tri-circular projection if and only if there exist nontrivial projections $Q, R \in \clb( H^{p}(\mathbb{T}^2))$ and a surjective linear isometry $T \in \clb( H^{p}(\mathbb{T}^2))$ such that one of the following assertions holds:
\begin{enumerate}
\item  $T = P + \lambda Q + \lambda^2 R$, $T^3 = I$, $\lambda$ is a cube root of unity, and $\begin{cases}
P = \frac{1}{3}(I + T + T^2)\\ Q=\frac{1}{3}(I+\lambda^2T+\lambda T^2)\\ R=\frac{1}{3}(I+\lambda T+ \lambda^2 T^2). \end{cases}$
\item $T = P - Q \pm i R$, $T^4 = I$, and $\begin{cases}P = \frac{1}{4}((1 \pm i)T^2 + 2T +(1 \mp i)I) \\ Q = \frac{1}{4}((1 \mp i)T^2 - 2T +(1 \pm i)I) \\ R = \frac{1}{2}(I - T^2).\end{cases}$
\item  $T = P \pm i Q - R$, $T^4 = I$, and $\begin{cases} P = \frac{1}{4}((1 \pm i)T^2 + 2T +(1 \mp i)I) \\ Q = \frac{1}{2}(I - T^2) \\ R = \frac{1}{4}((1 \mp i)T^2 - 2T +(1 \pm i)I).\end{cases}$
\item $T = P \pm i Q \mp i R$, $T^4 = I$, and $\begin{cases} P = \frac{1}{2}(I + T^2)\\ Q = \frac{1}{4}((-1 \pm i)T^2 \mp 2iT +(1 \pm i)I) \\ R = \frac{1}{4}((-1 \mp i)T^2 \pm 2iT +(1 \mp i)I).
\end{cases}$
\end{enumerate}
\end{theorem}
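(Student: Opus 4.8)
The plan is to prove both implications, treating the backward direction as a verification and concentrating the real effort on the forward direction. For the backward direction, suppose one of the four assertions holds. In each case the scalars appearing as coefficients of $Q$ and $R$ are distinct elements of $\bt \setminus \{1\}$ (two of $\{-1, i, -i\}$, or $\lambda, \lambda^2$ for a primitive cube root $\lambda$), and the displayed formulas for $P, Q, R$ are exactly the spectral (Lagrange) projections attached to $T$ under the hypothesis $T^3 = I$ or $T^4 = I$. A direct computation using $T^3 = I$ (respectively $T^4 = I$) then shows that $P, Q, R$ are idempotents with $P \oplus Q \oplus R = I$ and $T = P + \lambda_1 Q + \lambda_2 R$; since $Q, R$ are assumed nontrivial and $T$ is a surjective linear isometry, Lemma \ref{lemma: GTP class} certifies that $P$ is a generalized tri-circular projection.

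For the forward direction I would mirror the strategy of Theorem \ref{Th1}. The definition supplies distinct $\lambda_1, \lambda_2 \in \bt \setminus \{1\}$, nonzero projections $Q, R$ with $P \oplus Q \oplus R = I$, and a surjective linear isometry $T = P + \lambda_1 Q + \lambda_2 R$, and Lemma \ref{lemma: GTP class} yields the cubic identity $(T - I)(T - \lambda_1 I)(T - \lambda_2 I) = 0$, i.e. $T^3 - (1+a)T^2 + (a+b)T - bI = 0$ with $a = \lambda_1 + \lambda_2$, $b = \lambda_1 \lambda_2$. Substituting the representation \eqref{eqn: T in 2 vari} (or \eqref{eqn: infty on D2} when $p = \infty$) and computing $T^2, T^3$ via the cocycle products $\tau_0 \tau_1$, $\tau_0 \tau_1 \tau_2$ and the twist products $\sigma \sigma_1$, $\sigma \sigma_1 \sigma_2$, I would obtain the two-variable master identity: the exact analogue of \eqref{g3}, now carrying the second-variable arguments $w\sigma$, $w\sigma\sigma_1$, $w\sigma\sigma_1\sigma_2$.

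The first reduction restricts the master identity to functions $f(z,w) = g(z)$; since the twist does not act on such functions, this collapses to the single-variable identity \eqref{g3}, and the Lagrange-interpolation argument of Theorem \ref{Th1} forces $\tau$ to have order $1$, $2$, or $3$ in $Aut(\bd)$. The heart of the proof is then a case analysis on this order, the new feature being the interaction with $\sigma$; the clean bookkeeping device is the $w$-grading $H^p(\bt^2) = \bigoplus_{k \geq 0} w^k H^p(\bt)$, on which $T$ is block-diagonal and acts at the $k$-th level by the weighted composition operator $g \mapsto \alpha \sigma^k \tau_0 \, (g \circ \tau)$. Testing the master identity against the monomials $w^k, w^k z, w^k z^2$ produces, level by level, the polynomial relations among $\alpha, \tau, \sigma, \lambda_1, \lambda_2$ that must hold. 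When $\tau$ has order $3$ these relations run parallel to Case I of Theorem \ref{Th1} and force $\{1, \lambda_1, \lambda_2\}$ to be the cube roots of unity, giving assertion (1); when $\tau = id$ one checks that $\alpha \sigma(z)^k \in \{1, \lambda_1, \lambda_2\}$ for every $k$, which again forces the cube-root configuration (after discarding the degenerate possibilities that kill one of $P, Q, R$) and rules out the fourth roots $\pm i$.

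I expect the genuinely hard case, and the source of the richer answer, to be $\tau$ of order $2$. Here $T^2$ acts as the pure twist $f(z,w) \mapsto \alpha^2 f(z, w\sigma\sigma_1)$ (the cocycle $\tau_0 \tau_1$ being $\equiv 1$), so its eigenvalues are squares, and a value $-1$ of the twist $\sigma\sigma_1$ produces the eigenvalues $\pm i$ of $T$. The task is to show that the three-eigenvalue constraint then forces $\{1, \lambda_1, \lambda_2\}$ to be exactly three of the fourth roots of unity $\{1, -1, i, -i\}$ (always containing $1$, since $P \neq 0$), and to match each admissible labelling of $\lambda_1, \lambda_2$ with one of the sub-cases of (2), (3), (4) while verifying that $Q, R$ stay nonzero. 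Once the eigenvalue set is pinned down, $T^3 = I$ or $T^4 = I$ is immediate because the minimal polynomial of $T$ divides $x^3 - 1$ or $x^4 - 1$, and the stated formulas for $P, Q, R$ are then read off from Lemma \ref{lemma: GTP class} with the corresponding $\lambda_1, \lambda_2$. The main obstacle throughout is controlling the non-constant twist $\sigma$ and exhaustively eliminating the degenerate configurations in which one of the three projections vanishes, exactly the bookkeeping that made Cases II and III of Theorem \ref{Th1} collapse but which here survives into several genuinely distinct families.
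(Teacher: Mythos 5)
Your proposal follows essentially the same route as the paper: Lemma \ref{lemma: GTP class} plus the two-variable isometry representation \eqref{eqn: T in 2 vari} give the cubic functional identity, Lagrange interpolation in the $z$-variable reduces $\tau$ to order $1$, $2$, or $3$, and your case analysis (order $3$, or $\tau = id$ with a nontrivial cube-root twist, giving assertion (1); order $2$ with $\sigma\sigma_1 = -1$ giving the fourth-root families (2)--(4); all other configurations degenerate) matches the paper's Cases 1--3 exactly, with your $w$-grading being merely a repackaging of the paper's substitution of the monomials $z^m$, $w^m$ and Lagrange polynomials in the $w$-variable. The one small imprecision is the parenthetical claim that $\tau^2 = id$ forces $\tau_0\tau_1 \equiv 1$ (a priori it is only a unimodular constant, owing to the choice of branch of the $p$-th root), but this is harmless since, as in the paper, the relevant constant is pinned down by the functional equation itself via $\alpha^2\tau_0\tau_1 = -(a+b)$.
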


\begin{proof}
Suppose $1\leq p<\infty$. As in the proof of Theorem \ref{Th1}, there exist distinct scalars $\lambda_1, \lambda_2\in \mathbb{T}\setminus \{1\}$ and non-zero projections $Q, R \in \clb(H^{p}(\mathbb{T}^2))$ such that $P \oplus Q \oplus R = I$ and $T:=P + \lambda_1 Q + \lambda_2 R$ is a surjective linear isometry. Moreover
\begin{equation}\label{gp1}
P = \frac{(T - \lambda_{1}I)(T - \lambda_2I)}{(1 - \lambda_1)(1 - \lambda_2)}, Q = \frac{(T - I)(T - \lambda_2I)}{(\lambda_1 - 1)(\lambda_1 - \lambda_2)}, R = \frac{(T - I)(T - \lambda_1 I)}{(\lambda_2 - 1)(\lambda_2 - \lambda_1)},
\end{equation}
and
\begin{equation}\label{gp2}
T^3 - (1 + a)T^2 + (a + b)T - bI = 0,
\end{equation}
where
\[
a = \lambda_1 + \lambda_2 \text{ and } b = \lambda_1\lambda_2.
\]
By \eqref{eqn: T in 2 vari}, there exist $\tau \in Aut(\bd)$, unimodular function $\sigma \in L^\infty(\bt)$, and $\alpha \in \mathbb{T}$ such that
\[
Tf(z, w) = \alpha (\tau'(z))^{\frac{1}{p}} f(\tau(z), w \sigma(z)),
\]
for all $f\in H^p({\mathbb{T}^2})$ and $z, w \in \bt$. With the notation introduced preceding the statement of this theorem, we have
\begin{equation}\label{eqn: T*1}
Tf = \alpha \tau_0 f(\tau, w \sigma) \qquad (f\in H^p({\mathbb{T}^2})),
\end{equation}
and, it follows that
\begin{equation}\label{eqn: T*2}
T^2f = \alpha^2 \tau_0 \tau_1 f(\tau^2, w \sigma \sigma_1)
\end{equation}
and
\[
T^3f = \alpha^3 \tau_0 \tau_1 \tau_2 f(\tau^3, w \sigma \sigma_1 \sigma_2),
\]
for all $f\in H^p({\mathbb{T}^2})$. The identity in \eqref{gp2} then yields
\begin{align}\label{gp3}
\alpha^3 \tau_0 \tau_1 \tau_2 f(\tau^3, w \sigma \sigma_1 \sigma_2) - (1+a) \alpha^2 \tau_0 \tau_1 f(\tau^2, w \sigma \sigma_1) + (a+b) \alpha \tau_0 f(\tau, w \sigma) - bf = 0,
\end{align}
for all $f \in H^{p}(\mathbb{T}^2)$. As in the proof of Theorem \ref{Th1}, we again conclude about the following three possible cases:
\begin{enumerate}
\item $\tau = id$.
\item $\tau \neq id$ and $\tau^2 = id$.
\item $\tau, \tau^2 \neq id$ and $\tau^3 = id$.
\end{enumerate}

\noindent \textbf{Case 1:} Suppose $\tau, \tau^2 \neq id$ and $\tau^3 = id$. We find by using calculations similar to those used in the proof of Case 1 of Theorem \ref{Th1} that $T^3 = I$ and
\[
P = \frac{1}{3}(I + T + T^2), Q = \frac{1}{3}(I + \lambda^2 T + \lambda T^2), R =  \frac{1}{3}(I + \lambda T + \lambda^2 T^2).
\]

\noindent \textbf{Case 2:} Assume that $\tau \neq id$ and $\tau^2 = id$. Since $\tau^2 = id$, it follows that $\tau_2 = \tau_0$ and $\sigma_2 = \sigma$. Therefore, \eqref{gp3} yields
\begin{align}\label{Cgp}
\alpha^3 \tau_0^2 \tau_1 f(\tau, w \sigma^2 \sigma_1) - (1+a) \alpha^2 \tau_0 \tau_1 f(id, w \sigma \sigma_1) + (a+b) \alpha \tau_0 f(\tau, w \sigma) - bf = 0,
\end{align}
for all $f \in H^{p}(\mathbb{T}^2)$. In particular, if $f = z^m$, $m \geq 0$, then
\begin{align}\label{eqn: zm}
\alpha^3 \tau_0^2 \tau_1 \tau^m - (1+a) \alpha^2 \tau_0 \tau_1 z^m + (a+b) \alpha \tau_0 \tau^m - bz^m = 0,
\end{align}
and hence, for $f = 1$, we obtain
\begin{equation}\label{eqn: f=1}
\alpha^3 \tau_0^2 \tau_1 = (1+a) \alpha^2 \tau_0 \tau_1 - (a+b) \alpha \tau_0 + b.
\end{equation}
This is the identity \eqref{eqn: eqn C2 1} obtained in Case 2 during the proof of Theorem \ref{Th1}. Performing the same computation for Case 2 in the proof of Theorem \ref{Th1} results in
\begin{align}\label{gp21}
\lambda_1 = -1,  \text{ or } \lambda_2 = -1, \text{ or } \lambda_1  = -\lambda_2.
\end{align}
Similarly, if $f = w^m$, $m\geq 0$, then \eqref{Cgp} implies
\begin{equation}\label{eqn: f=wm}
\alpha^3 \tau_0^2 \tau_1 (\sigma^2 \sigma_1)^m - (1+a) \alpha^2 \tau_0 \tau_1 (\sigma \sigma_1)^m + (a+b) \alpha \tau_0 \sigma^m - b = 0.
\end{equation}
If $m=1$, then $\alpha^3 \tau_0^2 \tau_1 (\sigma^2 \sigma_1) - (1+a) \alpha^2 \tau_0 \tau_1 (\sigma \sigma_1) + (a+b) \alpha \tau_0 \sigma - b = 0$, and hence, using \eqref{eqn: f=1}, it follows that
\[
(1+a) \alpha^2 \tau_0 \tau_1 (\sigma^2 \sigma_1 - \sigma \sigma_1) - (a+b)  \alpha \tau_0 (\sigma^2 \sigma_1 - \sigma) + b(\sigma^2 \sigma_1 - 1) = 0.
\]
By \eqref{eqn: b in 1}, we know that $b = - (1+a) \alpha^2 \tau_0 \tau_1$. As we are in the same setting as Case 2 of  proof of Theorem \ref{Th1}, we have
\[
-b (\sigma^2 \sigma_1 - \sigma \sigma_1) - (a+b)  \alpha \tau_0 (\sigma^2 \sigma_1 - \sigma) + b(\sigma^2 \sigma_1 - 1) = 0.
\]
After cancelling similar terms, we finally get to the identity
\begin{align}\label{gp18}
b (\sigma \sigma_1 - 1) = (a+b) \alpha \tau_0 (\sigma^2 \sigma_1 - \sigma).
\end{align}
Consider the identity \eqref{eqn: f=wm} again, this time with $m=2$:
\[
\alpha^3 \tau_0^2 \tau_1 (\sigma^2 \sigma_1)^2 - (1+a) \alpha^2 \tau_0 \tau_1 (\sigma \sigma_1)^2 + (a+b) \alpha \tau_0 \sigma^2 - b = 0.
\]
In view of \eqref{eqn: f=1}, we know $\alpha^3 \tau_0^2 \tau_1 = (1+a) \alpha^2 \tau_0 \tau_1 - (a+b) \alpha \tau_0 + b$, and hence, the above equality yields
\[
(1+a) \alpha^2 \tau_0 \tau_1((\sigma^2 \sigma_1)^2 - (\sigma \sigma_1)^2) - (a+b) \alpha \tau_0 ((\sigma^2 \sigma_1)^2 - \sigma^2) + b((\sigma^2 \sigma_1)^2 - 1) = 0.
\]
Substituting $b = - (1+a) \alpha^2 \tau_0 \tau_1$ (see \eqref{eqn: b in 1}) in the above, we derive
\[
b((\sigma^2 \sigma_1)^2 - (\sigma \sigma_1)^2) + (a+b) \alpha \tau_0 ((\sigma^2 \sigma_1)^2 - \sigma^2) - b((\sigma^2 \sigma_1)^2 - 1) = 0.
\]
When we rearrange and simplify by canceling common terms, we get
\[
b((\sigma \sigma_1)^2 -1) = (a+b) \alpha \tau_0 ((\sigma^2 \sigma_1)^2 - \sigma^2).
\]
Applying \eqref{gp18} to this, we find
\[
b((\sigma \sigma_1)^2 -1) = b  (\sigma \sigma_1 - 1) (\sigma^2 \sigma_1 + \sigma).
\]
The equality simplifies further and finally admits the following form:
\[
(\sigma \sigma_1 - 1) (\sigma \sigma_1 + 1)(\sigma - 1) =0.
\]
Therefore, we conclude that
\begin{align}
\sigma \sigma_1 = -1, \text{ or } \sigma \sigma_1 = 1, \text{ or } \sigma = 1.
\end{align}
This combined with \eqref{gp21} results in nine subcases. We summarise them in three subcases. We proceed in the following manner. First, we recall that (see \eqref{eqn: alpha tau 0 and 1})
\begin{equation}\label{eqn: alpha tau 01}
\alpha^2 \tau_0 \tau_1 = -(a+b).
\end{equation}

\noindent\textsf{Subcase (i). $\sigma \sigma_1 = -1$:} Let $\lambda_1 = -1$. Then $a+b = -1$, and hence \eqref{eqn: alpha tau 01} implies $\alpha^2 \tau_0 \tau_1 = 1$. By \eqref{eqn: T*2}, it follows that
\[
(T^2f) (z,w) = f(z, -w) \qquad (z, w \in \bt),
\]
for all $f\in H^p(\mathbb{T}^2)$. Since $\lambda_1 = -1$, by \eqref{gp1} implies $R = \frac{1}{\lambda_2^2-1}(T^2 - I)$. Then
\[
Rf(z, w) = \frac{1}{\lambda_{2}^{2} - 1}(f(z, -w) - f(z, w)),
\]
from which, we further conclude that
\[
R^2f(z, w) = \frac{2}{(\lambda_{2}^{2} - 1)^2}(f(z, w) - f(z, -w)),
\]
for all $f\in H^p(\mathbb{T}^2)$ and $z, w \in \bt$. Since $R^2 = R$, we have
\[
0 = \big(\frac{2}{\lambda_{2}^{2} - 1} + 1\big)(f(z, w) - f(z, -w)) = \frac{\lambda_{2}^{2} + 1}{\lambda_{2}^{2} - 1}(f(z, w) - f(z, -w)),
\]
that is, $ -(\lambda_{2}^{2} + 1) (Rf)(z,w) = 0$ for all $f$. Equivalently, $\lambda_{2} = \pm i$. If $\lambda_2 = i$, then $T=P - Q + iR$, and consequently $T^2 = P + Q - R$. From these two equations we get $P = \frac{1}{2}(T^2 + T +(1 -i)R) = \frac{1}{4}((1 + i)T^2 + 2T +(1 - i)I)$. The similar conclusions hold for $\lambda_2 = -i$. Using \eqref{gp1} we observe that $Q = \frac{1}{4}((1 \mp i)T^2 - 2T +(1 \pm i)I),~R = \frac{1}{2}(I - T^2)$.

\noindent Next, assume that $\lambda_2 = -1$. Here too, $\alpha^2 \tau_0 \tau_1 = 1$, and a calculation similar to the one above implies that
\[
Qf(z, w) = \frac{1}{\lambda_{1}^{2} - 1}\big(f(z, -w) - f(z, w)\big)
\]
and
\[
Q^2f(z, w) = \frac{2}{(\lambda_{1}^{2} - 1)^2}\big(f(z, w) - f(z, -w)\big),
\]
for all $f\in H^p(\mathbb{T}^2)$ and $z, w \in \bt$, and finally, $(\lambda_{1}^{2} + 1)Q = 0$.  Equivalently, $\lambda_1=\pm i$. If $\lambda_1=i$, then $T=P+iQ-R$ and consequently $T^2=P-Q+R$. We obtain $P = \frac{1}{2}(T^2 + T +(1 -i)Q) = \frac{1}{4}((1 + i)T^2 + 2T +(1 - i)I)$. The similar conclusions hold for $\lambda_1 = -i$. Applying \eqref{gp1} we get $Q = \frac{1}{2}(I - T^2),~R = \frac{1}{4}((1 \mp i)T^2 - 2T +(1 \pm i)I)$.

\noindent Finally, assume that $\lambda_1 = -\lambda_2$. Then $a+b = -\lambda_{1}^{2}$ and hence $\alpha^2 \tau_0 \tau_1 = \lambda_{1}^{2}$.  Hence $T^2f(z, w) = \lambda_{1}^{2}f(z, -w)$. By \eqref{gp1}, we have $P = \frac{1}{1 - \lambda_{1}^{2}} (T^2 - \lambda_1^2 I)$, which implies that
\[
Pf(z, w) = \frac{\lambda_{1}^{2}}{1 - \lambda_{1}^{2}}\big(f(z, -w) - f(z, w)\big),
\]
and
\[
P^2f(z, w) = 2\bigg(\frac{\lambda_{1}^{2}}{1 - \lambda_{1}^{2}}\bigg)^2\big(f(z, w) - f(z, -w)\big),
\]
for all $f\in H^p(\mathbb{T}^2)$ and $z, w \in \bt$. Since $P^2 = P$, it follows that
\[
2\bigg(\frac{\lambda_{1}^{2}}{1 - \lambda_{1}^{2}}\bigg)^2\big(f(z, w) - f(z, -w)\big) = \frac{\lambda_{1}^{2}}{1 - \lambda_{1}^{2}}\big(f(z, -w) - f(z, w)\big),
\]
for all $f\in H^p(\mathbb{T}^2)$ and $z, w \in \bt$. We deduce therefore that $(\lambda_{1}^{2} + 1)P = 0$, and hence $\lambda_{1} = \pm i$. Suppose $\lambda_1 = i$. Then $\lambda_2 = -i$ and hence $T = P + iQ - iR$ and $T^2=P-Q-R$. Moreover, $P = \frac{1}{2}(I + T^2)$. Similarly, if $\lambda_1 = -i$, then $T = P - iQ + iR$ and  $T^2=P-Q-R$, and again $P = \frac{1}{2}(I + T^2)$. Applying \eqref{gp1} we obtain $Q = \frac{1}{4}((-1 \pm i)T^2 \mp 2iT +(1 \pm i)I),~R = \frac{1}{4}((-1 \mp i)T^2 \pm 2iT +(1 \mp i)I)$.

\noindent\textsf{Subcase (ii). $\sigma \sigma_1 = 1$:} Recall from \eqref{eqn: alpha tau 01} that $\alpha^2 \tau_0 \tau_1 = -(a+b)$. If $\lambda_1 = -1$ or $\lambda_2 = -1$, then $(a + b) = - 1$, and hence $\alpha^2 \tau_0 \tau_1 =1$. Then \eqref{eqn: T*2} implies $T^2 = I$, and hence, by \eqref{gp1}, $R = 0$ (if $\lambda_1 = -1$) or $Q = 0$ (if $\lambda_2 = -1$). Similarly, if $\lambda_1 = -\lambda_2$, then $\alpha^2 \tau_0 \tau_1 = \lambda_{1}^{2}$. This implies $T^2 = \lambda_{1}^{2} I$, and hence, by \eqref{gp1}, $P = 0$.

\noindent\textsf{Subcase (iii). $\sigma = 1$:} As in the previous subcase, $\lambda_1 = -1$ or $\lambda_2 = -1$ imply $R = 0$  or $Q = 0$, respectively. If $\lambda_1 = -\lambda_2$, then again $\alpha^2 \tau_0 \tau_1 = \lambda_{1}^{2}$, which implies that $T^2 = \lambda_{1}^{2} I$, and consequently $P = 0$.

\noindent\textbf{Case 3:} Suppose $\tau = id$. Then $\sigma_1 = \sigma_2 = \sigma$ and $\tau_1 = \tau_2 = \tau_0$, and hence \eqref{gp3} yields
\begin{align}\label{gp22}
\alpha^3\tau_0^3 f(\tau, w \sigma^3) - (1 + a) \alpha^2 \tau^2_0 f(\tau, w \sigma^2) + (a + b) \alpha \tau_0 f(\tau, w \sigma) - b f(\tau, w) = 0,
\end{align}
for all $f\in H^p(\mathbb{T}^2)$. In particular, if $f = 1$, then
\[
\alpha^3 \tau_0^3  - (1 + a) (\alpha \tau_0)^2 + (a + b) (\alpha \tau_0) - b = 0,
\]
and hence $\alpha = \tau_0^{-1}, \lambda_1 \tau_0^{-1}, \lambda_2 \tau_0^{-1}$. In addition, we also have three alternatives:
\begin{enumerate}
\item $\sigma = 1$.
\item $\sigma \neq 1$ and $\sigma^2 = 1$.
\item $\sigma \neq 1$, $\sigma^2 \neq 1$, and $\sigma^3 = 1$.
\end{enumerate}
Indeed, if $\sigma(z_0), \sigma(z_0)^2, \sigma(z_0)^3 \neq 1$ for some $z_0 \in \bt$, then there exists a Lagrange polynomial $L$ such that $L(w_0\sigma(z_0)) = L(w_0 \sigma(z_0)^2) = L(w_0 \sigma(z_0)^3) = 0$ and $L(w_0) = 1$. If we set $f(z, w) = L(w)$, $z, w \in \bt$, in \eqref{gp22}, then
\begin{align*}
\alpha^3\tau_0^3 L(w_0 \sigma(z_0)^3) - (1 + a) \alpha^2 \tau^2_0 L(w_0 \sigma(z_0)^2) + (a + b) \alpha \tau_0 L(w_0\sigma(z_0)) - bL(w_0) = 0,
\end{align*}
implies that $b = \lambda_1 \lambda_2 = 0$: a contradiction. We now move on to the three following subcases.

\noindent\textsf{Subcase (i). $\sigma = 1$:} If $\alpha = \tau_0^{-1}$ or $\alpha = \lambda_1 \tau_0^{-1}$, then $T = I$ or $T = \lambda_1I$ respectively. Hence $R = 0 = Q$ or $P = 0 = R$. Similarly, if $\alpha = \lambda_2 \tau_0^{-1}$, then $T = \lambda_2 I$, which implies that $P = 0 = Q$.

\noindent\textsf{Subcase (ii).  $\sigma \neq 1$, $\sigma^2 = 1$:} By \eqref{gp22}, we get
\[
\alpha^3\tau_0^3f(\tau, w\sigma) - (1 + a)\alpha^2 \tau_0^2f(\tau, w) + (a + b)\alpha\tau_0f(\tau, w\sigma) - bf(\tau, w) = 0 \notag.
\]
That is
\[
(\alpha^3\tau_0^3 + (a + b)\alpha\tau_0)f(\tau, w\sigma) - ((1 + a)\alpha^2 \tau_0^2 + b)f(\tau, w)= 0 \notag.
\]
Since $\sigma\neq 1$, there exists $z_0$ such that $\sigma(z_0) \neq 1$. We choose Lagrange polynomials, $f(z, w) = L_i(w)\in H^p(\mathbb{T}^2),~~i=1,2 $ such that $ f(z_0, w_0\sigma(z_0))= L_1(w_0\sigma(z_0)) = 1, f(z_0, w_0) = L_1(w_0) = 0$ and $f(z_0, w_0\sigma(z_0)) = L_2(w_0\sigma(z_0)) = 0, f(z_0, w_0) = L_2(w_0) = 1$. Then we have
\[
\alpha^3\tau_0(z_0)^3 + (a + b)\alpha\tau_0(z_0) = 0,
\]
and
\[
(1 + a)\alpha^2 \tau_0(z_0)^2 + b = 0.
\]
From these two equalities we have
\[
(1 + a)(a + b) - b = (1 + \lambda_1 + \lambda_2)( \lambda_1 + \lambda_2 + \lambda_1\lambda_2) - \lambda_1\lambda_2 = 0.
\]
We get, $\lambda_1 = -1$ or $\lambda_2 = -1$ or $\lambda_1 = -\lambda_2$. We consider this with other three possibilities $\alpha = \tau_0^{-1}, \lambda_1 \tau_0^{-1}, \lambda_2 \tau_0^{-1}$ as follows:

\begin{enumerate}
\item $\alpha = \tau_0^{-1}$: If $\lambda_1 = -1$ or $\lambda_2 = -1$, then $R = 0$ or $Q = 0$. If $\lambda_1 = -\lambda_2$, then $P = I$.
\item $\alpha=\lambda_1\tau_0^{-1}$: Suppose $\lambda_1 = -1$ or $\lambda_1 = -\lambda_2$ imply $R = 0$ or $P = 0$. If $\lambda_2 = -1$, then $Q = I$.
\item $\alpha=\lambda_2\tau_0^{-1}$: Let $\lambda_1 = -1$ then $R = I$. If $\lambda_2 = -1$ or $\lambda_1 = -\lambda_2$, then $Q = 0$ or $P = 0$ respectively.
\end{enumerate}

So these subcases are not possible.

\noindent\textsf{Subcase (iii). $\sigma \neq 1$, $\sigma^2 \neq 1$,  $\sigma^3 = 1$:} By \eqref{gp22}, we get
\begin{align}
\alpha^3\tau_0^{3}f(\tau, w) - (1 + a)\alpha^2 \tau_0^{2}f(\tau, w\sigma^2) + (a + b)\alpha\tau_0f(\tau, w\sigma) - bf(\tau, w)= 0 \notag.
\end{align}
Since $\sigma\neq 1, \sigma^2 \neq 1$, then there exists $z_0$ such that $\sigma(z_0) \neq 1$ and $(\sigma(z_0))^2 \neq 1$. We choose Lagrange polynomials, $f(z, w) = L_i(w)\in H^p(\mathbb{T}^2)$, $i=1,2$, such that
\[
f(z_0, w_0\sigma(z_0)^2)= L_1(w_0\sigma(z_0)^2) = 1, f(z_0, w_0) = L_1(w_0) = 0,
\]
\[
f(z_0, w_0\sigma(z_0)) = L_1(w_0\sigma(z_0)) = 0, ~~\mbox{and}~~ f(z_0, w_0\sigma(z_0))= L_2(w_0\sigma(z_0)) = 1,
\]
\[
 f(z_0, w_0) = L_2(w_0) = 0, f(z_0, w_0\sigma(z_0)^2) = L_2(w_0\sigma(z_0)^2) = 0.
 \]
 Then $-(1 + a)\alpha^2 (\tau_0(z_0))^2 = 0$, and $(a + b)\alpha(\tau_0(z_0)) = 0$. Using these two equations we have, $1 + a = 0$ and $a + b = 0$. We observe that $\lambda_1 = \lambda$ and $\lambda_2 = \lambda^2$. Therefore, by \eqref{g1} and \eqref{g2}, it follows that $T^3 = I$ and
\[
P = \frac{I + T + T^2}{3},  Q = \frac{I + \lambda^2 T + \lambda T^2}{3}, \text{ and } R =  \frac{I + \lambda T + \lambda^2 T^2}{3},
\]
which completes the proof of the theorem for $1 \leq p < \infty$. The argument for $p = \infty$ case is also similar. In this case, one needs to use \eqref{eqn: infty on D2}. This completes the proof of the theorem.
\end{proof}

Given the results presented in this paper, representing surjective linear isometries of vector-valued $H^p$-spaces on the bidisc for all $1\leq p \leq \infty$, $p\neq 2$ is an intriguing problem. However, see \cite{Berkson} for the answer in the scalar case.

\vspace{0.2in}

\noindent\textbf{Acknowledgement:}
The research of the first named author is supported by the Theoretical Statistics and Mathematics Unit, Indian Statistical Institute, Bangalore Centre, India and J.C. Bose Fellowship of SERB (India) of Prof. Gadadhar Mishra. The research of the second named author is supported in part by TARE (TAR/2022/000063), SERB, Department of Science \& Technology (DST), Government of India. The research of the third named author is supported by the NBHM postdoctoral fellowship, Department of Atomic Energy (DAE), Government of India (File No: 0204/3/2020/R$\&$D - II/2445).

\vspace{0.0in}


\bibliographystyle{amsplain}

\end{document}